\documentclass[10pt,reqno]{amsart}

\usepackage[numbers]{natbib}
\usepackage[utf8]{inputenc}
\usepackage{amsmath}
\usepackage{amsfonts}
\usepackage{amssymb}
\usepackage{mathrsfs}
\usepackage{mathtools}
\usepackage{enumitem}
\usepackage{url}
\usepackage{hyperref}
\usepackage[noabbrev,capitalize,nameinlink]{cleveref}

\usepackage{amsthm}
\newtheorem{thm}{Theorem}[section]
\newtheorem{lem}[thm]{Lemma}
\newtheorem{prop}[thm]{Proposition}

\theoremstyle{definition}
\newtheorem{defn}[thm]{Definition}

\theoremstyle{remark}
\newtheorem*{rem}{Remark}

\newcommand{\overbar}[1]{\mkern 1.5mu\overline{\mkern-1.5mu#1\mkern-1.5mu}\mkern 1.5mu}
\newcommand\abs[1]{\left|#1\right|}

\newcommand\loc{\mathrm{loc}}

\DeclareMathOperator{\Div}{div}

\DeclareMathOperator{\supp}{supp}

\DeclareMathOperator{\dist}{dist}

\numberwithin{equation}{section}

\begin{document}

\title[Stability]{Rigidity and Stability of Submanifolds with Entropy Close to One}

\author{Letian Chen}
\address{Department of Mathematics, Johns Hopkins University, 3400 N. Charles Street, Baltimore, MD 21218}
\email{lchen155@jhu.edu}

\date{Mar. 15, 2020}

\begin{abstract}
We show that an $n$-dimensional surface whose entropy is close to that of an $n$-dimensional plane is close in Hausdorff distance to some $n$-dimensional plane at every scale. Moreover we show that self-expanders of low entropy converge in the Hausdorff sense to their asymptotic cones.
\end{abstract}

\maketitle

\section{Introduction}
Given an $n$-dimensional properly embedded (connected) smooth submanifold of $\mathbb{R}^{n+k}$, Colding-Minicozzi \cite{CM1} introduced the entropy of $\Sigma$:
\begin{align*}
    \lambda[\Sigma] = \sup_{x_0 \in \mathbb{R}^{n+k}, t_0 > 0} F[t_0\Sigma + x_0] = \sup_{x_0,t_0} \int_{\Sigma} \frac{1}{(4\pi t_0)^{n/2}} e^{-\frac{\abs{x-x_0}^2}{4t_0}} d\mathcal{H}^n
\end{align*}
where 
\begin{align*}
    F[\Sigma] = (4\pi)^{-n/2} \int_\Sigma e^{-\frac{\abs{x}^2}{4}} d\mathcal{H}^n
\end{align*}
is the Gaussian surface area of $\Sigma$. The constant $(4\pi)^{-n/2}$ is to ensure that the entropy of an $n$-dimensional plane is 1. Note that by definition the entropy is invariant under ambient scaling and translation. The entropy plays a key role in the study of mean curvature flow since, by Huisken's monotonicity formula \cite{Hui}, it is a nonincreasing quantity under the flow. \par 
Many works have been done concerning closed hypersurfaces of low entropy, in particular spheres and (generalized) cylinders. Bernstein and L. Wang \cite{BW1} have shown that for $2 \le n \le 6$ and $k=1$, the round sphere $\mathbb{S}^n$ minimizes the entropy among all closed hypersurfaces in $\mathbb{R}^{n+1}$ and is rigid in the sense that $\lambda[\Sigma] = \lambda[\mathbb{S}^n]$ implies $\Sigma = \mathbb{S}^n$ up to a translation and rotation. Later Zhu \cite{Zhu} generalized this to all dimensions. In this paper we examine the case for noncompact submanifolds. It is obvious that $n$-planes minimize the entropy among all smooth embedded surfaces. Our first result shows that the planes are rigid, i.e. they are the only surfaces with entropy 1. 
\begin{thm}
\label{t14}
Suppose $\Sigma$ is an $n$-dimensional embedded submanifold of $\mathbb{R}^{n+k}$ (possibly with boundary). If $\lambda[\Sigma] = 1$, then $\Sigma$ is flat. That is, the second fundamental form vanishes at every point $x \in \Sigma$. In particular, if $\Sigma$ is properly embedded in $\mathbb{R}^{n+k}$, then $\Sigma$ is an $n$-dimensional plane.
\end{thm}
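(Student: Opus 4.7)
The strategy combines Huisken's monotonicity formula under mean curvature flow (MCF) with the universal lower bound $\lambda[\Sigma] \ge 1$ for every smooth $n$-submanifold: at any interior smooth $p \in \Sigma$ one has $F_{p, t_0}[\Sigma] \to \Theta(\Sigma, p) = 1$ as $t_0 \to 0^+$, so the supremum defining $\lambda[\Sigma]$ is already at least $1$ from this family of Gaussians.

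\textbf{Properly embedded, boundaryless case (main argument).} Start MCF with $\Sigma_0 = \Sigma$, producing a smooth flow $\{\Sigma_t\}_{t \in [0, T)}$ for some short time $T > 0$. Entropy is nonincreasing under MCF (Colding--Minicozzi), while $\lambda[\Sigma_t] \ge 1$ always, so $\lambda[\Sigma_t] \equiv 1$. Fix $T_0 \in (0, T)$ and any $x_0 \in \Sigma_{T_0}$. Huisken's monotonicity asserts that $I(t) = \int_{\Sigma_t}\rho_{x_0, T_0}(\cdot, t)\, d\mathcal{H}^n$ is nonincreasing on $[0, T_0)$ with $I(0) = F_{x_0, T_0}[\Sigma] \le 1$ and $\lim_{t \to T_0^-} I(t) = \Theta(\Sigma_{T_0}, x_0) = 1$. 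Hence $I \equiv 1$, and the equality case forces the backward self-shrinker equation $H + (x - x_0)^\perp / (2(T_0 - t)) = 0$ on $\Sigma_t$. Since this must hold for \emph{every} $x_0 \in \Sigma_{T_0}$, subtracting two instances gives $(x_0 - x_0')^\perp = 0$ at every $y \in \Sigma_t$, i.e.\ $x_0 - x_0' \in T_y\Sigma_t$; as the displacements $\{x_0 - x_0'\}$ span the parallel direction of the affine hull of $\Sigma_{T_0}$ and $T_y\Sigma_t$ has dimension $n$, this forces $\Sigma_{T_0}$ into an $n$-dimensional affine plane. Combined with $H \equiv 0$ from the shrinker equation, $\Sigma_{T_0}$ is an open subset of this plane, and by MCF uniqueness so is $\Sigma = \Sigma_0$. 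In particular $A \equiv 0$.

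\textbf{General case and main obstacle.} Because $A(p_0) = 0$ is a pointwise condition, at each interior $p_0 \in \Sigma$ it suffices to verify flatness locally. Suppose $A(p_0) \neq 0$; the osculating paraboloid of $\Sigma$ at $p_0$---the quadric graph $\{(x, \tfrac{1}{2}A(p_0)(x,x)) : x \in T_{p_0}\Sigma\}$---is a nontrivial, smooth, properly embedded submanifold without boundary, so by the main step applied to it, its entropy is strictly greater than $1$: there exist $(x_0^*, t_0^*)$ and $\delta > 0$ with $F_{x_0^*, t_0^*}[\text{paraboloid}] \ge 1 + \delta$. A quantitative localization---exploiting that $\rho_{x_0, t_0}$ concentrates exponentially near $x_0$ at scale $\sqrt{t_0}$---transfers this strict inequality to $\Sigma$ itself via a suitable rescaled $(x_0, t_0)$ near $p_0$ with $\sqrt{t_0} \sim |A(p_0)|^{-1}$, contradicting $\lambda[\Sigma] = 1$. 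The main obstacle is making this quantitative: one must control the $O(|x - p_0|^3)$ error between $\Sigma$ and its osculating paraboloid uniformly at the relevant scale, so that the $\delta$-gap for the paraboloid survives on the actual submanifold. The subsidiary conclusion that properly embedded connected flat $\Sigma$ is an entire plane is then elementary.
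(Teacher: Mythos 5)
Your monotonicity-rigidity argument (equality in Huisken's formula forces the shrinker equation centered at every $x_0 \in \Sigma_{T_0}$, whence planarity) is essentially the ``straightforward'' argument the paper itself sketches in the introduction for hypersurfaces with bounded curvature, and that part is fine. The genuine gap is your very first step: you \emph{assume} a smooth mean curvature flow $\{\Sigma_t\}_{t\in[0,T)}$ starting from $\Sigma$ exists for some $T>0$. The hypothesis $\lambda[\Sigma]=1$ gives no a priori bound on $\abs{A}$, and $\Sigma$ is of arbitrary codimension, so Ecker--Huisken short-time existence does not apply; as the paper puts it, ``without a curvature bound the surface may not be able to flow at all.'' Circumventing exactly this is the entire content of the paper's proof: it runs White's mean curvature flow \emph{with boundary} on $\Sigma\cap\overbar{B_R(x)}$ (which always exists as a standard integral Brakke flow in a compact ball), and then uses the sharp form of White's local regularity theorem (\cref{t31}, with $\abs{A_{\Sigma_t}}\le C(\varepsilon)/\sqrt{t}$ and $C(\varepsilon)\to 0$ as $\varepsilon\to 0$) to conclude the flow is smooth and instantaneously flat in $B_{R/2}(x)$; this also handles boundary and non-proper points uniformly, since only a small ball around each interior point is ever used.

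Your fallback for the general case --- transferring an entropy gap from the osculating paraboloid at $p_0$ to $\Sigma$ --- cannot be repaired, because the entropy excess of a curved surface is not visible at scales small compared to the curvature radius. Concretely, for $p_0\in\mathbb{S}^2\subset\mathbb{R}^3$ one computes $F_{p_0,t_0}[\mathbb{S}^2]=1-e^{-1/t_0}<1$ for every $t_0>0$, and $\sup_{x_0}F_{x_0,t_0}[\mathbb{S}^2]\to 1$ as $t_0\to 0$: the gap $\lambda[\mathbb{S}^2]-1$ is only realized at $\sqrt{t_0}$ comparable to the radius. Likewise the witnessing pair $(x_0^*,t_0^*)$ for your paraboloid has $\sqrt{t_0^*}\sim\abs{A(p_0)}^{-1}$, a \emph{fixed} scale at which $\Sigma$ differs from its osculating paraboloid by $O(1)$, not by a controllable Taylor error; and rescaling does not help, since the paraboloid's own gap $\delta$ degenerates at exactly the rate the approximation improves. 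So this is not merely a quantitative loose end but a step that fails; the localization has to be done through the flow with boundary (or some equally non-infinitesimal mechanism), as in the paper.
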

We note that, in the case $k = 1$ (i.e. hypersurfaces), rigidity is straightforward if we have some additional bound on the second fundamental form on $\Sigma$. Indeed, by short time existence of Ecker and Huisken \cite{EH}, we can run the mean curvature flow on $\Sigma$. By Huisken's monotonicity formula it follows $\Sigma$ is a self-shrinker of entropy 1, which must then be a plane (cf. Proposition 2.10 in \cite{Whi1}). Thus the main issue here is that without a curvature bound the surface may not be able to flow at all. \par 
To get by this issue, we will use mean curvature flow with boundary inside an ambient ball $\overbar{B_R(0)}$ in $\mathbb{R}^{n+k}$, the theory of which has recently been developed by White \cite{Whi2}. When we use a flow in a compact set, short time existence is always guaranteed (in fact by White's work the flow exists in the weak sense for all positive time). Using a slight variant of White's local regularity theorem (Corollary 3.4 in \cite{Whi1}) we can conclude that $\Sigma$ is flat in $B_{R/4}(0)$. Since $R$ is arbitrary we conclude that $\Sigma$ is a plane. \par  
Bernstein and L. Wang \cite{BW2} classified self-shrinkers of low entropy in $\mathbb{R}^3$ and were able to prove a Hausdorff stability result in \cite{BW3}:
\begin{thm}[Bernstein-Wang]
\label{t12}
Given $\varepsilon > 0$, there is $\delta = \delta(\varepsilon) >0$ such that if $\Sigma$ is a closed hypersurface in $\mathbb{R}^3$ with $\lambda[\Sigma] < \lambda[\mathbb{S}^2] + \delta$, then 
\begin{align*}
    \dist_H(\Sigma,\rho\mathbb{S}^2 + y) < \rho \varepsilon
\end{align*}
for some $\rho > 0$ and $y \in \mathbb{R}^3$.
\end{thm}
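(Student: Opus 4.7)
The plan is a compactness-and-contradiction argument driven by mean curvature flow. Suppose the conclusion fails: there exist $\varepsilon_0 > 0$ and a sequence of closed hypersurfaces $\Sigma_i \subset \mathbb{R}^3$ with $\lambda[\Sigma_i] \to \lambda[\mathbb{S}^2]$, yet $\dist_H(\Sigma_i, \rho \mathbb{S}^2 + y) \geq \rho\varepsilon_0$ for every $\rho > 0$ and $y \in \mathbb{R}^3$. Since both the entropy and the failure condition are invariant under scaling and translation, I would first normalize each $\Sigma_i$ so that $\operatorname{diam}(\Sigma_i) = 1$ and $0 \in \Sigma_i$. The entropy bound yields uniform local area bounds, so after passing to a subsequence the varifolds $[\Sigma_i]$ converge as Radon measures and the underlying closed sets converge in Hausdorff distance on compact sets to some limit $K_\infty \subset \overbar{B_1(0)}$.

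Next I would run mean curvature flow from each $\Sigma_i$. By Huisken's monotonicity formula, $\lambda[\Sigma_i(t)] \leq \lambda[\Sigma_i]$, and since $\Sigma_i$ is closed the flow must develop a singularity at some finite time $T_i$. Applying Huisken's parabolic rescaling at a first singular space-time point produces a tangent flow whose time $-1$ slice is a self-shrinker of entropy at most $\lambda[\mathbb{S}^2] + \delta$. By the classification of low-entropy shrinkers in $\mathbb{R}^3$ from \cite{BW2}, for $\delta$ sufficiently small this shrinker must be a plane, a round sphere, or a round cylinder $\mathbb{S}^1 \times \mathbb{R}$. A direct computation gives $\lambda[\mathbb{S}^1 \times \mathbb{R}] = \sqrt{2\pi/e} > 4/e = \lambda[\mathbb{S}^2]$, which excludes cylinders, and planes cannot be tangent flows at a genuine singularity of a closed flow. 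Hence the tangent flow is a shrinking round sphere.

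Once the tangent flow is known to be a shrinking sphere, Huisken's convergence theorem (or modern uniqueness-of-tangent-flow results at spherical singularities) upgrades this to smooth convergence of the Huisken-rescaled flow $\Sigma_i(t)$ to a fixed round sphere for $t$ near $T_i$ and $i$ large. Smooth dependence of MCF on initial data then transports this back to $t = 0$: $\Sigma_i$ itself is smoothly close to the round sphere obtained by running the shrinking sphere backward in time by $T_i$, and in particular is Hausdorff close to it after rescaling. This contradicts the assumption and supplies the desired $\delta = \delta(\varepsilon)$.

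The main obstacle is the global-in-$\Sigma_i$ nature of the Hausdorff conclusion. A priori a sequence $\Sigma_i$ could carry a thin or distant secondary component that is invisible to the varifold limit but destroys Hausdorff closeness. The essential input blocking this scenario is the entropy bound itself, which forces every point of $\Sigma_i$ to have Gaussian density at least $1$ at every scale; combined with the classification excluding all nonspherical low-entropy tangent flows, this concentrates the entire surface around a single round sphere.
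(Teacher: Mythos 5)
First, note that the paper does not prove this statement itself: Theorem~\ref{t12} is quoted from \cite{BW3}, and the present paper only adapts its scaling/compactness strategy to prove Theorem~\ref{t13}. Your overall architecture (contradiction, run MCF, classify the low-entropy tangent flow via \cite{BW2}, rule out the cylinder since $\lambda[\mathbb{S}^1\times\mathbb{R}]=\sqrt{2\pi/e}>4/e=\lambda[\mathbb{S}^2]$, rule out planes at a genuine singularity) is indeed the Bernstein--Wang approach. But your final step contains a genuine gap: ``smooth dependence of MCF on initial data'' is a \emph{forward}-in-time statement, and you are using it \emph{backward} --- from closeness of $\Sigma_i(t)$ to a tiny round sphere near the extinction time $T_i$ you cannot conclude anything about the shape of $\Sigma_i=\Sigma_i(0)$ at the macroscopic scale. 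Mean curvature flow is parabolic and ill-posed backward in time, and the sphere you identify near $T_i$ lives at a scale that has a priori no relation to the scale of $\Sigma_i$.

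The two missing ingredients, which are exactly what \cite{BW3} (and this paper, in its adaptation) supply, are: (i) the monotonicity formula pins the Gaussian density ratios centered at the extinction point $(x_i,T_i)$ between $\lambda[\mathbb{S}^2]$ (from below, since the density at scale $0^+$ is $\lambda[\mathbb{S}^2]$) and $\lambda[\Sigma_i]<\lambda[\mathbb{S}^2]+\delta_i$ (from above) at \emph{every} scale up to $\sqrt{T_i}$; this almost-rigidity forces the parabolic rescaling of the flow by $T_i^{-1/2}$ to converge to the self-similarly shrinking sphere on the whole time interval, in particular at times just after the initial time, so the correct normalization is by extinction time, not by diameter; and (ii) a two-sided clearing-out / Hausdorff continuity estimate of the form $\dist_H(\Sigma_{t_1},\Sigma_{t_2})\le C\sqrt{|t_1-t_2|}$ (Lemma 4.1 of \cite{BW3}, Theorem 1.2 of \cite{Wang}, \cref{p51} here) to pass from the a.e.-time slices controlled by Brakke-flow convergence to the actual initial surface. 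Point (ii) is precisely what eliminates the ``thin or distant secondary component'' scenario you correctly flag at the end but do not resolve: the Gaussian density lower bound at points of $\Sigma_i$ alone does not prevent a tentacle from being invisible in the varifold limit, whereas the clearing-out lemma shows every point of $\Sigma_i$ has nearby flow at slightly later times and hence is seen by the limit. Without (i) and (ii) the argument does not close.
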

This is subsequently generalized by S. Wang \cite{Wang} to all higher dimensions using a forward clearing out lemma (as there is currently no classification of low entropy self-shrinker available in higher dimensions). We prove the corresponding theorem for $n$-planes. That is, given a smooth, properly embedded $\Sigma$ with entropy sufficiently close to 1, then $\Sigma$ is Hausdorff close to some $n$-plane up to a translation. Here the closeness is interpreted in the sense of Reifenberg \cite{Rei} (see also \cite{Her}): 
\begin{defn}
\label{d13}
    Given an $n$-dimensional smooth, properly embedded submanifold $\Sigma$ of $\mathbb{R}^{n+k}$, the (Reifenberg) planar distance of $\Sigma$ is 
    \begin{align*}
        \dist_P(\Sigma) = \sup_{p \in \Sigma} \sup_{R > 0} \inf_{P} \frac{1}{R}\dist_H((P+p) \cap B_R(p), \Sigma \cap B_R(p))
    \end{align*}
    where $\dist_H(\cdot,\cdot)$ is the Hausdorff distance and the infimum is taken over all $n$-dimensional planes $P$ passing through the origin in $\mathbb{R}^{n+k}$.
\end{defn}
Evidently this distance is scaling and translation invariant. Our main theorem is the following:
\begin{thm}
\label{t13}
Given $\varepsilon > 0$, there is $\delta = \delta(\varepsilon,n,k) > 0$ such that for every $n$-dimensional, noncompact smooth, properly embedded submanifold $\Sigma$ of $\mathbb{R}^{n+k}$ with $\lambda[\Sigma] < 1 + \delta$ we have $\dist_P(\Sigma) < \varepsilon$. 
\end{thm}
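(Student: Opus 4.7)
The strategy is a contradiction-by-blow-up, reducing to \autoref{t14} via the Brakke flow. Suppose the theorem fails; then there is some $\varepsilon_0 > 0$ and a sequence of properly embedded $n$-submanifolds $\Sigma_i$ of $\mathbb{R}^{n+k}$ with $\lambda[\Sigma_i] \to 1$ and $\dist_P(\Sigma_i) \geq \varepsilon_0$. Unpacking the suprema in the definition of $\dist_P$, we can pick $p_i \in \Sigma_i$ and $R_i > 0$ for which no $n$-plane $P$ through the origin satisfies $R_i^{-1}\dist_H((P + p_i) \cap B_{R_i}(p_i), \Sigma_i \cap B_{R_i}(p_i)) < \varepsilon_0/2$. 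Since $\lambda$ and $\dist_P$ are both invariant under translation and dilation, I replace $\Sigma_i$ by $\tilde\Sigma_i := R_i^{-1}(\Sigma_i - p_i)$; then $0 \in \tilde\Sigma_i$, $\lambda[\tilde\Sigma_i] \to 1$, and
\[
\inf_P \dist_H(P \cap B_1(0),\; \tilde\Sigma_i \cap B_1(0)) \geq \varepsilon_0/2,
\]
the infimum taken over $n$-planes $P$ through the origin.

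Next I extract a smooth limit via mean curvature flow with boundary. Fix a large $L$ and apply White's theory \cite{Whi2} to flow $\tilde\Sigma_i \cap \overline{B_L(0)}$ with free boundary on $\partial B_L(0)$; call the resulting Brakke flow $\mathcal{M}_i$. Huisken's monotonicity formula bounds every Gaussian density ratio of $\mathcal{M}_i$ at any space-time point by $\lambda[\tilde\Sigma_i]$, which tends to $1$. A suitable higher-codimension variant of White's local regularity theorem (Corollary 3.4 of \cite{Whi1}, already invoked for \autoref{t14}) then produces uniform $C^\infty$ estimates for $\mathcal{M}_i$ on every compact subset of $B_{L/2}(0) \times (0, cL^2)$ once $i$ is large. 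Passing to a subsequence, $\mathcal{M}_i \to \mathcal{M}_\infty$ in $C^\infty_{\loc}$ on this region, and $\mathcal{M}_\infty$ is a smooth flow whose Gaussian densities are everywhere at most $1$. For each fixed $t_* > 0$ the slice $\mathcal{M}_\infty(t_*)$ is a smooth embedded submanifold (possibly with boundary on $\partial B_{L/2}(0)$) with $\lambda \leq 1$; since any smooth embedded $n$-submanifold has entropy at least $1$ by a tangent-plane blow-up, equality forces \autoref{t14} to apply, so $\mathcal{M}_\infty(t_*)$ is flat. As planar pieces have vanishing mean curvature, smoothness of the flow shows $\mathcal{M}_\infty$ is a static $n$-plane $P$ through the origin.

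Finally I transfer this conclusion back to $t = 0$ and to Hausdorff distance. The entropy bound yields uniform Euclidean area growth $\mathcal{H}^n(\tilde\Sigma_i \cap B_R(0)) \leq C(n,k)R^n$, and combined with the weak-in-time continuity of Brakke flows forced by the uniform Gaussian density bound, $\tilde\Sigma_i \to P$ as integer rectifiable varifolds on $B_{L/2}(0)$. The same uniform area bound upgrades varifold convergence to Hausdorff convergence of $\tilde\Sigma_i \cap B_1(0)$ to $P \cap B_1(0)$, directly contradicting the displayed lower bound. I expect the most delicate piece to be exactly this final transfer from smooth behaviour of the flow at positive times back to Hausdorff information at the initial time $t=0$, where no a priori curvature bound on $\tilde\Sigma_i$ is available: the content of the argument is that as $\lambda[\tilde\Sigma_i] \to 1$ the parabolic regions on which White's regularity applies extend uniformly down toward $t=0$, so no definite time-gap is lost when passing to the limit.
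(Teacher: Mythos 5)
Your overall architecture is the same as the paper's: argue by contradiction, normalize so that $p^i=0$ and $R^i=1$, run White's mean curvature flow with boundary in a fixed larger ball, use the local regularity theorem to get smoothness of the restricted flow at positive times, extract a limit flow of entropy $1$, and invoke \cref{t14} to conclude the limit is a static plane. Up to that point the proposal matches the paper. The genuine gap is in the final step, which you correctly flag as the delicate one but do not actually carry out. Two claims there do not hold as stated. First, ``weak-in-time continuity of Brakke flows'' only gives one-sided control: for a Brakke flow $t\mapsto\int\phi\,d\mu_t - Ct$ is nonincreasing, so mass at $t=0$ can exceed the limit of the masses from positive times, and smooth convergence of the slices at $t_*>0$ to $P$ does not by itself force $\mathcal{H}^n\llcorner\tilde\Sigma_i\to\mathcal{H}^n\llcorner P$ at $t=0$. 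Second, and more seriously, an \emph{upper} area bound cannot upgrade varifold convergence to Hausdorff convergence: a sequence can converge to a plane as varifolds while carrying thin tentacles of negligible measure at arbitrary Hausdorff distance. Ruling this out requires a \emph{lower} mass bound at small scales, i.e.\ a non-collapsing statement, and that is precisely the ingredient missing from your argument.

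The paper supplies this via a two-sided clearing-out lemma (\cref{p51}): for a smooth low-entropy flow, if $x_0\in\Sigma_{t_0}$ then both $\Sigma_{t_0-r^2}$ and $\Sigma_{t_0+r^2}$ carry mass at least $\eta r^n$ in $B_r(x_0)$, with $\eta=\eta(n,k)$. (Its proof is itself a blow-up argument reducing to \cref{t14}.) This is what transfers Hausdorff information in both directions across the time gap $[0,\varepsilon^2/16]$: given $x\in\tilde\Sigma_i\cap B_1(0)$, the forward direction produces a point of $\supp\mu^i_{\varepsilon^2/16}$ within $\varepsilon/4$ of $x$, and at that positive time the flow is smooth and uniformly close to $P$, so $x$ is within roughly $\varepsilon/2$ of $P$; conversely, given $z\in P\cap B_1(0)$, convergence at time $\varepsilon^2/16$ gives a nearby point $y$ of the flow, and the backward direction of clearing-out produces a point of $\tilde\Sigma_i=\supp\mu^i_0$ within $\varepsilon/4$ of $y$. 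Without \cref{p51} (or an equivalent quantitative statement, e.g.\ integrating the velocity bound $|H|\le C(\varepsilon)/\sqrt{t}$ coming from the local regularity theorem to show points move at most $C\sqrt{t}$), your final sentence remains an expectation rather than a proof, so the argument does not close. Your intuition that ``the parabolic regions extend uniformly down toward $t=0$'' is not enough on its own, since the curvature bound degenerates like $t^{-1/2}$ there.
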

\begin{rem}
In view of Reifenberg's original definition this is saying that any $\Sigma$ with $\lambda[\Sigma] < 1 + \delta$ is $(\varepsilon,R)$-Reifenberg flat for every $R > 0$.
\end{rem}
The assumption that $\Sigma$ is noncompact is redundant. In fact, there is $\delta_0 > 0$ depending on $n$ and $k$ only such that for any closed $n$-dimensional smooth surface $\Sigma$ in $\mathbb{R}^{n+k}$ one has $\lambda[\Sigma] \ge 1 + \delta_0$. In the case $k=1$ the optimal constant is $\delta_0 = \lambda[\mathbb{S}^n] - 1$ by the works \cite{BW1} and \cite {Zhu}. In arbitrary codimension the existence of such a $\delta_0$ is a consequence of White's local regularity theorem \cite{Whi1}: if $\lambda[\Sigma] < 1 + \delta_0$ then the mean curvature flow never develops a singularity by virtue of the curvature bound, but the mean curvature flow of a closed surface must develop a singularity in finite time by parabolic maximum principle (see for example \cite{Eck}). \par 
To prove \cref{t13} we adapt the scaling argument used in the proof of \cref{t12}. Briefly, we argue by contradiction and take a sequence of smooth surfaces with entropy bounded above by $1 + \frac{1}{n}$. We run a mean curvature flow with boundary for each one of them and after normalization (possibly passing to a subsequence) these flows converge to a flow of entropy 1, which by \cref{t13} has to be a flow of $n$-planes. $C^1_{\loc}$ convergence then implies Hausdorff closeness which gives a contradiction. \par 
Finally we show that self-expanders of low entropy are close to a fixed cone in the Hausdorff sense. More precisely, recall that an $n$-dimensional smooth, properly embedded surface $\Sigma$ in $\mathbb{R}^{n+k}$ is a self-expander if it is a solution to the equation 
\begin{align*}
    \frac{x^\perp}{2} = H_{\Sigma}(x)
\end{align*}
It follows that $\{\sqrt{t}\Sigma\}_{t > 0}$ is a smooth mean curvature flow in $\mathbb{R}^{n+k}$. Self-expanders serve as singularity models of the mean curvature flow as they give rise to potential continuation of the flow past a (conical) singularity. We refer to \cite{BW4} and \cite{Ding} and references therein for more about (asymptotically conical) self-expanders. Given a self-expander $\Sigma$, there is a natural way to associate a (unique) cone, $\mathcal{C}_\Sigma$, by looking at its space-time track under the mean curvature flow (see \cref{exp}). There is no guarantee that this cone is regular, so one generally cannot expect closeness in $C^{k,}$. Our result says that at least in the low entropy case, self-expanders are close to this cone in (pointed) Hausdorff topology. Moreover this cone is actually quite "large" in the sense that it is flat in the Reifenberg sense.
\begin{thm}
\label{t15}
There is $\delta = \delta(n,k) > 0$ such that for every $n$-dimensional smooth, properly embedded self-expander $\Sigma \subseteq \mathbb{R}^{n+k}$ with $\lambda[\Sigma] < 1 + \delta$ there is a (unique) cone $\mathcal{C}$ such that 
\begin{align*}
    \lim_{t \to 0} \dist_H(\sqrt{t}\Sigma \cap \overbar{B_R(0)},\mathcal{C} \cap \overbar{B_R(0)}) = 0
\end{align*}
for all $R > 0$. In fact, there is some constant $C = C(n,k)$ such that
\begin{align*}
    \dist_H(\sqrt{t}\Sigma \cap \overbar{B_R(0)},\mathcal{C} \cap \overbar{B_R(0)}) \le C \sqrt{t}
\end{align*}
for sufficiently small $t$. 
\end{thm}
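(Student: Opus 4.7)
The plan is to view $\{\sqrt{t}\Sigma\}_{t>0}$ as a smooth mean curvature flow (induced by the self-expander equation $H = x^\perp/2$) and to identify $\mathcal{C}$ as its initial trace. First I would choose $\delta = \delta(n,k)$ small so that, applying \cref{t13} to every time slice, each $\sqrt{t}\Sigma$ is $\varepsilon$-Reifenberg flat for a preselected small $\varepsilon$; I would also record that the entropy bound $\lambda[\Sigma] < 1 + \delta$ yields the uniform density estimate $\mathcal{H}^n(\Sigma \cap B_r(x)) \le C(n) r^n$.

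For the existence of the cone, I would let $\mu_t := \mathcal{H}^n \lfloor \sqrt{t}\Sigma$ and, using the density bound, extract a weak-$\ast$ subsequential limit $\mu_0$ as $t_k \downarrow 0$; Reifenberg flatness together with the density bound guarantee that the supports $\sqrt{t_k}\Sigma$ subconverge in Hausdorff distance on compact subsets to $\mathcal{C} := \supp \mu_0$. The exact parabolic rescaling $\mu_{\lambda^2 t}(\lambda A) = \lambda^n \mu_t(A)$ passes to the limit, forcing $\mu_0(\lambda A) = \lambda^n \mu_0(A)$, so $\mathcal{C}$ is an $n$-dimensional cone through the origin.

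For uniqueness I would invoke Huisken's monotonicity formula for the smooth flow $\{\sqrt{t}\Sigma\}$ at an arbitrary spacetime point $(x_0, T) \in \mathbb{R}^{n+k} \times (0, \infty)$: the Gaussian density
\begin{align*}
    \Theta_{x_0, T}(t) := \frac{1}{(4\pi(T-t))^{n/2}}\int e^{-\abs{x-x_0}^2/(4(T-t))}\, d\mu_t(x)
\end{align*}
is monotone in $t$ and uniformly bounded by $\lambda[\Sigma]$, so the limit $\Theta_{x_0, T}(0^+)$ exists. By weak-$\ast$ convergence together with the uniform Gaussian tails coming from the entropy bound, this limit equals the heat-kernel convolution of $\mu_0$ at $(x_0, T)$ for \emph{any} subsequential limit. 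Since the heat semigroup is injective on Radon measures of polynomial growth, $\mu_0$ is uniquely determined, and hence $\mathcal{C}$ is independent of the subsequence.

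For the quantitative rate, the scaling identity $\sqrt{t}\mathcal{C} = \mathcal{C}$ converts the claim into a uniform estimate $\dist_H(\Sigma \cap \overbar{B_r(0)}, \mathcal{C} \cap \overbar{B_r(0)}) = O(1)$ as $r \to \infty$. Away from the origin the cone $\mathcal{C}$ is smooth, and White's local regularity theorem \cite{Whi1}, applied to the low-entropy flow $\{\sqrt{s}\Sigma\}$, produces smooth graphical convergence of $\sqrt{s}\Sigma$ to $\mathcal{C}$ with linear rate on compact subsets of $\mathbb{R}^{n+k}\setminus\{0\}$; near the origin, the trivial containment $\dist_H(\sqrt{t}\Sigma \cap \overbar{B_\rho(0)}, \mathcal{C} \cap \overbar{B_\rho(0)}) \le 2\rho$ with $\rho$ of order $\sqrt{t}$ takes over. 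The main obstacle is the uniqueness step: because $\mathcal{C}$ may be singular at the origin, classical tangent-flow uniqueness does not directly apply, and the workaround is Huisken's monotonicity at \emph{every} forward spacetime point combined with injectivity of Gaussian convolution; once uniqueness is secured, the rate reduces to a standard application of White's regularity.
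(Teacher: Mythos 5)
Your overall architecture (extract a limit object, prove it is a cone by scaling, prove uniqueness, then get a rate) is reasonable, but two steps have genuine gaps, and both are gaps precisely where the paper's key lemma --- the two-sided clearing out estimate of \cref{p51} and the resulting displacement bound $\dist_H(\sqrt{t_1}\Sigma,\sqrt{t_2}\Sigma)\le C\sqrt{t_1-t_2}$ --- does the work. First, weak-$*$ convergence $\mu_{t_k}\rightharpoonup\mu_0$ does not by itself give Hausdorff convergence of $\supp\mu_{t_k}$ to $\supp\mu_0$: the inclusion ``limit points of supports lie in $\supp\mu_0$'' requires a uniform \emph{lower} mass bound $\mu_{t_k}(B_r(x_k))\ge\eta r^n$ along the sequence, and the only density estimate you record is the upper bound coming from the entropy, which points the wrong way. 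Reifenberg flatness of the slices does not obviously supply this lower bound either (it controls shape, not measure). Without it, uniqueness of the measure $\mu_0$ (your heat-kernel injectivity argument, which is fine in itself) does not yield uniqueness of the Hausdorff limit of the supports, since mass could vanish along wisps that survive as sets. The paper avoids this by defining $\mathcal{C}_\Sigma$ directly as the $t=0$ slice of the closed spacetime track and comparing sets to sets via the displacement bound.

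Second, and more seriously, the rate $C\sqrt{t}$ cannot be obtained the way you propose. You assert that $\mathcal{C}$ is smooth away from the origin and that White's regularity gives graphical convergence with a linear rate on compact subsets of $\mathbb{R}^{n+k}\setminus\{0\}$. But the flow $\{\sqrt{s}\Sigma\}$ exists only for $s>0$, so at a spacetime point $(x_0,t_0)$ the Gaussian density ratios are controlled only at scales $r<\sqrt{t_0}$; \cref{t31} then yields $\abs{A}\le C/\sqrt{t_0}$, which degenerates as $t_0\to 0$ and gives no curvature bound on the time-zero slice. Indeed the paper explicitly warns that $\mathcal{C}_\Sigma$ need not be regular and that one cannot expect closeness in $C^{k}$; your smoothness claim is exactly what cannot be guaranteed. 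The correct mechanism is the clearing-out lemma: if $\sqrt{t}\Sigma$ moved farther than $C\sqrt{t_1-t_2}$ between times $t_2<t_1$, a parabolic ball around the displaced point would be empty at one of the two times, contradicting the lower area bound forced by entropy close to $1$ (via the rigidity \cref{t14} applied to the blow-up limit). Once you have $\dist_H(\sqrt{t/2^{i}}\Sigma,\sqrt{t/2^{i+1}}\Sigma)\le C\sqrt{3t}\,2^{-(i+1)}$, a telescoping geometric series produces both the existence of the Hausdorff limit and the $C\sqrt{t}$ rate simultaneously, with no regularity of $\mathcal{C}$ needed. I would encourage you to prove \cref{p51} (or quote the analogous statement from \cite{Wang}) and rebuild the existence and rate steps on top of it; your uniqueness-via-heat-kernel idea can then be retained or replaced by the paper's simpler observation that the candidate cone is canonically defined.
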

\begin{rem}
In codimension 1 one can use instead Lemma 6.1 of \cite{Wang} to relax the entropy bound to $\lambda[\Sigma] < \lambda[\mathbb{S}^n] + \delta$. 
\end{rem}
\subsection*{Acknowledgements} The author is extremely grateful to his advisor, Jacob Bernstein, for suggesting the problem and for helpful advice and constant encouragement. The author also wishes to thank Lu Wang and Shengwen Wang for helpful conversations, and Brian White for some comments on mean curvature flow with boundary.
\section{Preliminaries}
\subsection{Notation} 
For $x \in \mathbb{R}^{n+k}$ and $t \in \mathbb{R}$, $B_r(x)$ denotes the open ball in $\mathbb{R}^{n+k}$ and $C_r(x,t)$ denotes the parabolic cylinder given by 
\begin{align*}
    C_R(x,t) = B_R(x) \times (t-R^2,t+R^2)
\end{align*}
in $\mathbb{R}^{n+k} \times \mathbb{R}$. \par 
Given an open set $U \subseteq \mathbb{R}^{n+k}$, a mean curvature flow $\mathcal{M}$ in $U$ is a collection of smooth $n$-dimensional submanifolds $\{\Sigma_t\}_{t \in I}$ for some interval $I$ such that 
\begin{enumerate}
    \item For every $t_0 \in I$ and $x_0 \in \Sigma_t$ there is $r > 0$ such that $C_R(x_0,t_0) \subseteq U \times I$. 
    \item There is a smooth map $\Psi: B_1(0) \times (t_0 - R^2,t_0+R^2) \to \mathbb{R}^{n+k}$ such that $\Psi_t(x) = \Psi(x,t): B_1(0) \to \mathbb{R}^{n+k}$ is a parameterization of $B_R(x) \cap \Sigma_t$.
    \item The parametrization $\Psi$ satisfies the mean curvature equation 
    \begin{align*}
        \left(\frac{\partial \Psi(x,t)}{\partial t}\right)^{\perp} = H_{\Sigma_t}(\Psi(x,t))
    \end{align*}
    where $H_{\Sigma_t}$ is the mean curvature vector of $\Sigma_t$.
\end{enumerate}
We sometimes also use $\mathcal{M}$ to denote the space-time track of the flow, namely 
\begin{align*}
    \mathcal{M} = \{(x,t) \in U \times I \mid x \in \Sigma_t\}
\end{align*} \par 
When $U = \mathbb{R}^{n+k}$, we define the entropy of the mean curvature flow to be 
\begin{align*}
    \lambda[\mathcal{M}] = \sup_{t \in I} \lambda[\Sigma_t]
\end{align*}
which by Huisken's monotonicity formula is equal to the entropy of the initial surface. The definition of Gaussian surface area and entropy extends naturally to surfaces in $U$, and thus to smooth mean curvature flow in $U$ (see for example \cite{BW3}). We note here that if $\Sigma$ is a smooth submanifold of $\mathbb{R}^{n+k}$, then $\lambda[\Sigma \cap U] \le \lambda[\Sigma]$. \par 
Given a mean curvature flow $\mathcal{M} = \{\Sigma_t\}_{t \in [t_0,t_1]}$ and a space-time point $(x,t) \in U \times [t_0,t_1]$, the Gaussian density ratio of $\mathcal{M}$ at $(x_0,t) \in U \times (t_0,t_1]$ of scale $r$ is defined as 
\begin{align*}
    \Theta(\mathcal{M},(x_0,t),r) = \int_{\Sigma_{t-r^2}} \frac{1}{(4\pi r^2)^{n/2}} e^{-\frac{\abs{x-x_0}^2}{4r^2}} d\mathcal{H}^n
\end{align*}
provided $t-r^2 > t_0$. The Gaussian density at $(x_0,t)$ is the limit of the above expression as $r$ decreases to 0, which exists by Huisken's monotonicity formula.
\subsection{Mean Curvature Flow with Boundary}
Here we summarize the work of White \cite{Whi2} concerning mean curvature flow with boundary. Although White's original work deals only with the case of hypersurfaces, the existence result generalizes essentially verbatim to arbitrary codimension, which is already more than we need in the paper. \par 
For general terminology in geometric measure theory we refer to \cite{Sim} and  \cite{Ilm} for more details. See also section 4 in \cite{Whi2}. Let $N$ be a compact $(n+k)$-dimensional Riemannian manifold with smooth boundary and $U \subset N$ be an open set (in our case the only relevant ambient manifolds are the closed balls $\overbar{B_R(0)}$). We denote by $\mathcal{IM}_m(U)$ the space of integral $m$-rectifiable Radon measure on $U$. For $\mu \in \mathcal{IM}_m(U)$ we let $V(\mu)$ denote the integral $m$-rectifiable varifold associated to $\mu$. Given an $n$-dimensional smooth submanifold $\Sigma$ whose boundary $\Gamma$ is a smooth manifold of $\partial N$, we define 
\begin{align*}
    \mathcal{V}_n(U,\Gamma) = \{ \mu \in \mathcal{IM}_n(U) \mid \abs{\nu} \le 1 \;\; \mathcal{H}^{n-1}\text{-a.e.}\}
\end{align*}
where $\nu$ is the normal given by the divergence theorem: for every compactly supported $C^1$ vector field on $U$,
\begin{align*}
    \int \Div_{V(\mu)} X d\mu = -\int H \cdot X d\mu + \int \nu \cdot X d(\mathcal{H}^{n-1} \llcorner \Gamma)
\end{align*}
Here $H$ is the (generalized) mean curvature vector of $V(\mu)$. \par 
By an integral $n$-Brakke flow with boundary $\Gamma$ in $U \subset N$ we mean a collection of integral Radon measures $\{\mu_t\}_{t \in I}$ for some time interval $I$ such that
\begin{enumerate}
    \item For a.e. $t \in I$, $\mu_t \in \mathcal{V}_n(U,\Gamma)$
    \item For every compact interval $[a,b] \subset I$ and compact set $K \subset U$
    \begin{align*}
        \int_a^b \int_K (1+\abs{H}^2) d\mu_t dt < \infty
    \end{align*}
    \item For $[a,b] \subset I$ and every $u \in C_c^2(U \times [a,b], \mathbb{R}^+)$,
    \begin{align*}
        \int \phi d\mu_b - \int \phi d\mu_a \le \int_a^b \int \left(-\phi \abs{H}^2 + H \cdot \nabla \phi + \frac{\partial \phi}{\partial t} \right)d\mu_t dt
    \end{align*}
\end{enumerate}
(Definition 10 of \cite{Whi2}). We will refer to the above flow simply as an integral Brakke flow with boundary if $U$ and $\Gamma$ are understood. At a smooth interior point of the flow, inequality (3) becomes equality and the usual mean curvature flow equation 
\begin{align*}
    \left(\frac{\partial x}{\partial t}\right)^\perp = H_{\Sigma}(x)
\end{align*}
is satisfied. \par
For the rest of the paper, we will restrict ourselves to the case $N \subseteq \mathbb{R}^{n+k}$, although everything in this section can be formulated for a general Riemannian manifold $N$ by an isometric embedding into some Euclidean space (see \cite{Whi2}). For $\rho > 0$ and any set $S \subseteq \mathbb{R}^{n+k}$ we write $\rho S = \{x \in \mathbb{R}^{n+k} \mid \rho^{-1} x \in S\}$. Given an integral Brakke flow $\mathcal{M} = \{\mu_t\}_{t \in [0,T]}$ in $N$ with boundary $\Gamma$ , the parabolic rescaling about $(0,0)$ of scale $\rho > 0$ is an integral Brakke flow in $\rho N$ with boundary $\rho \Gamma$ given by
\begin{align*}
    \rho \mathcal{M} = \{\rho \mu_{\rho^{-2}t}\}_{t \in [0,\rho^2 T]}
\end{align*}
Standard parabolic blow-up argument together with compactness theorem and monotonicity formula for Brakke flow with boundary implies the existence of tangent flows at any given point $(p,t) \in N \times (0,T]$, see Sections 6-8 in \cite{Whi2}. Note that the tangent flow might not be unique. \par 
As is the case for Brakke flow without boundary, one has to take into account of sudden loss of mass. We say that an integral Brakke flow with boundary $\{\mu_t\}_{t \in I}$ is unit-regular if, for $p \in N$ and $t \in I$, one of the tangent flows at $(p,t)$ is a multiplicity-1 plane or half plane, then the flow is smooth near $(p,t)$ and there is no sudden loss of mass (Definition 24 in \cite{Whi2}). We say that the flow is standard if it is unit-regular and $\partial [[\mu_t]] = [[\Gamma]]$ for $t$-a.e., where $[[\mu]]$ denotes the rectifiable mod 2 flat chain associated to $\mu \in \mathcal{IM}_n(N)$ and $[[\Gamma]] = [[\mathcal{H}^{n-1}\llcorner \Gamma]]$ (Definition 28 of \cite{Whi2}). With these terminology we can now state a simplified version of the fundamental existence theorem of White.
\begin{thm}[Theorem 1, Theorem 29 in \cite{Whi2}]
Let $N$ and $\Sigma$ be as above and suppose further $\partial N$ is mean-convex. There exists a standard $n$-integral Brakke flow $\{\mu_t\}_{t \in [0,\infty)}$ in $N$ with boundary $\Gamma$ with $\mu_0 = \mathcal{H}^n \llcorner \Sigma$.
\end{thm}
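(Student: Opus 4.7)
The plan is to follow Ilmanen's elliptic regularization, adapted by White to the mean-convex boundary setting. For each $\varepsilon > 0$, I would seek an $(n+1)$-dimensional rectifiable set $\widetilde{\Sigma}_\varepsilon \subset N \times \mathbb{R}$ that minimizes the translator functional
\begin{align*}
J_\varepsilon[\widetilde{\Sigma}] = \int_{\widetilde{\Sigma}} e^{-z/\varepsilon}\, d\mathcal{H}^{n+1}
\end{align*}
subject to the boundary condition $\partial \widetilde{\Sigma}_\varepsilon = (\Sigma \times \{0\}) \cup (\Gamma \times [0,\infty))$. A critical point of $J_\varepsilon$ is a downward translating soliton for mean curvature flow of vertical speed $\varepsilon^{-1}$, so horizontal slicing $\mu^\varepsilon_t = \widetilde{\Sigma}_\varepsilon \cap \{z = \varepsilon^{-1} t\}$, suitably translated so that $\mu^\varepsilon_0 = \mathcal{H}^n \llcorner \Sigma$, produces an approximate mean curvature flow with the prescribed initial and boundary data.

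Existence of the minimizer $\widetilde{\Sigma}_\varepsilon$ follows from the direct method in the class of integral currents mod $2$, since $J_\varepsilon$ is lower semicontinuous and coercive on this class. The crucial role of the mean-convexity hypothesis on $\partial N$ is to provide a foliation of barriers (e.g.\ obtained from the signed distance to $\partial N$) which confine the minimizer to $N \times \mathbb{R}$ and force its boundary to coincide with the prescribed $\Gamma \times \mathbb{R}$; without this, the minimizer could peel onto $\partial N \times \mathbb{R}$ and no flow would emanate from $\Sigma$. Allard's boundary regularity theorem then yields the integrality structure away from a lower-dimensional singular set. To pass $\varepsilon \to 0$, the first variation of $J_\varepsilon$ gives, after rescaling, the Brakke inequality together with the boundary term $\int \nu \cdot X\, d(\mathcal{H}^{n-1} \llcorner \Gamma)$ recorded in item $(3)$ of the definition, and compactness of integral Brakke flows extracts a subsequential limit $\{\mu_t\}_{t\ge 0}$ in which the rectifiable mod-$2$ boundary condition $\partial[[\mu_t]] = [[\Gamma]]$ persists because every approximator satisfies it.

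The hardest step, and the one most peculiar to the boundary setting, is verifying that the resulting Brakke flow is standard in the sense of White, which amounts to unit-regularity at boundary points. This requires a parabolic boundary regularity theorem: if some tangent flow at $(p,t) \in \Gamma \times (0,\infty)$ is a multiplicity-one half-plane, then the space-time track is smooth in a parabolic neighborhood of $(p,t)$ and no sudden loss of mass occurs. The interior case is White's original local regularity theorem; the boundary case must be proved separately, and is obtained via a parabolic analogue of Allard's boundary regularity theorem together with a Hopf-type comparison against the static half-plane, both carried out in \cite{Whi2}. Once this regularity ingredient is in place, unit-regularity at boundary points and the identity $\partial[[\mu_t]] = [[\Gamma]]$ for a.e.\ $t$ transfer to the elliptic regularization limit by a standard clearing-out argument, completing the construction of a standard integral Brakke flow with boundary $\Gamma$.
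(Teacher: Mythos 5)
The first thing to say is that the paper does not prove this statement: it is imported verbatim (modulo the remark that the existence result generalizes to higher codimension) as Theorem~1/Theorem~29 of \cite{Whi2}, so there is no in-paper argument to compare yours against. Your outline does track the strategy of the cited proof --- Ilmanen-style elliptic regularization \cite{Ilm} with the additional boundary component $\Gamma \times [0,\infty)$, passage to the limit via compactness of Brakke flows, and a separate argument for unit-regularity --- so as a roadmap it points in the right direction. But it is a roadmap, not a proof: each named step (existence and regularity of the $J_\varepsilon$-minimizers with the prescribed boundary, the monotonicity formula and compactness theory for Brakke flows with boundary, persistence of $\partial[[\mu_t]] = [[\Gamma]]$ in the limit, and unit-regularity of elliptic-regularization limits) is itself a substantial theorem occupying several sections of \cite{Whi2}, and none is actually carried out here.

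Two specific imprecisions are worth flagging. First, in elliptic regularization the approximate flows are obtained by translating the minimizing soliton downward, $t \mapsto \widetilde{\Sigma}_\varepsilon - (t/\varepsilon)e_z$, which is an $(n+1)$-dimensional Brakke flow in $N \times \mathbb{R}$ whose limit is shown to split as a product $\mu_t \times \mathcal{L}^1$; the horizontal slices are used to recover integrality and the initial condition, but slicing at height $\varepsilon^{-1}t$ does not by itself produce an approximate flow, so your sentence conflates two different objects in the construction. Second, you overstate what ``standard'' requires. Unit-regularity is the conditional statement that the flow is smooth near $(p,t)$ \emph{if} some tangent flow there is a multiplicity-one plane or half-plane; the ingredient needed is an $\varepsilon$-regularity theorem near multiplicity-one half-planes, which is what transfers to elliptic-regularization limits. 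The full boundary regularity theorem of \cite{Whi2} (that boundary points of the standard flow are in fact always regular) is a separate, codimension-one result that the present paper's remark explicitly declines to use precisely because it does not generalize to higher codimension --- whereas the existence of a standard flow does. If standardness genuinely required that theorem, the higher-codimension version of the statement as used in this paper would not follow from your argument.
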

\begin{rem}
Of course, Brakke flow suffers from the lack of uniqueness. In general given an initial surface (with boundary) there could be multiple ways to evolve it by mean curvature, but the above theorem guarantees that a standard one exists. Moreover, in the codimension 1 case ($k=1$), White \cite{Whi2} proved a boundary regularity theorem which ensures boundary points along the flow are indeed regular points of the flow. This is not generally true in higher codimensions, but since we are mostly interested in the flow inside the half ball, we do not need to use any regularity assumption on the boundary. 
\end{rem}
Fix a $C^2$ cutoff function $\phi_0:\mathbb{R} \to \mathbb{R}$ that is identically 1 on $\{\abs{x} \le \frac{1}{2}\}$ and identically 0 on $\{\abs{x} > 1\}$ and let $\phi_R(x) = \phi\left(\frac{\abs{x}}{R}\right)$ for $x \in \mathbb{R}^{n+k}$. Given an integral Brakke flow with boundary $\mathcal{M} = \{\mu_t\}_{t \in [t_0,t_1]}$ in $\overbar{B_{R}(0)}$ and a point $(x_0,t) \in B_R(0) \times (t_0,t_1]$, the Gaussian density ratio of $\mathcal{M}$ at $(x_0,t)$ of scale $r$ (with $t - r^2 > t_0$) is defined similarly as 
\begin{align*}
    \Theta(\mathcal{M},(x_0,t),r) = \int \frac{\phi_R}{(4\pi r^2)^{n/2}} e^{-\frac{\abs{x-x_0}^2}{4r^2}} d\mu_{t-r^2}
\end{align*} \par 
The key observation is that, given an integral $n$-Brakke flow $\mathcal{M} = \{\mu_t\}_{t \in I}$ with boundary in $\overbar{B_R(0)}$ with the property that every point in $B_{R/2}(0) \times I$ is a smooth point of the flow, $\mathcal{M}$ restricts to a mean curvature flow without boundary in $B_{R/2}(0)$. To see this, first note that every interior point of the flow is a regular point by assumption, so the mean curvature flow equation is satisfied. Then by standard ODE theory we can choose a suitable time-varying parameterization to ensure that the definitions in 2.1 are satisfied. Moreover, the Gaussian density ratios of the restricted flow is bounded above by the Gaussian density ratios of the original flow with boundary, which are in turn bounded above by the entropy.
\section{Rigidity of Planes}
We begin with a reformulation of White's local regularity theorem \cite{Whi1} as observed by Ilmanen-Neves-Schulze \cite{INS}.
\begin{thm}
\label{t31}
Let $R \ge 1$. Suppose $\mathcal{M} = \{\mu_t\}_{t\in[0,\infty)}$ is an integral Brakke flow with boundary in $\overbar{B_R(0)}$ that is smooth in $B_{R/2}(0)$ for at least $t \in [0,1]$. If for some sufficiently small $\varepsilon > 0$, all the Gaussian density ratio at $(x_0,t) \in B_{R/4}(0) \times [0,1]$ satisfies 
\begin{align*}
    \Theta(\mathcal{M},(x_0,t),r) \le 1 + \varepsilon
\end{align*}
for $0 < r < \min\{\frac{R}{2} - \abs{x},\abs{t}^{1/2}\}$, then we have the curvature bound
\begin{align*}
    \abs{A_{\Sigma_t}(x)} \le \frac{C}{\sqrt{t}}
\end{align*}
for $x \in \supp \mu_t \cap B_{R/4}(0)$, $t \in (0,1]$, where $\Sigma_t$ is such that $\mu_t \llcorner B_{R/4}(0) = \mathcal{H}^n \llcorner \Sigma_t$ and $C = C(n,k,\varepsilon)$. Moreover, $C \to 0$ as $\varepsilon \to 0$.
\end{thm}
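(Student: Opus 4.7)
The plan is to reduce the statement directly to White's Euclidean local regularity theorem (Corollary 3.4 in \cite{Whi1}) applied to the interior of the flow. The only genuinely new ingredient is relating the Gaussian density ratios of the Brakke flow with boundary to the ones White's theorem sees; everything else is bookkeeping.

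First I would pass from the Brakke flow with boundary to an honest smooth mean curvature flow on the interior. By hypothesis $\mathcal{M}$ is smooth in $B_{R/2}(0)\times[0,1]$, so by the observation recorded at the end of Section~2 the restriction $\mathcal{M}\llcorner(B_{R/2}(0)\times[0,1])$ is a classical smooth mean curvature flow without boundary. Since the cutoff $\phi_R$ satisfies $0\le\phi_R\le 1$, the Gaussian density ratios of this restricted interior flow are bounded above by the Gaussian density ratios of the original flow with boundary; in particular, the hypothesis $\Theta(\mathcal{M},(x_0,t),r)\le 1+\varepsilon$ transfers to the interior flow for every admissible $(x_0,t,r)$.

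Next I would verify the scale condition required by White's theorem. For $x_0\in B_{R/4}(0)$ we have $\frac{R}{2}-|x_0|>\frac{R}{4}\ge\frac{1}{4}$, so the parabolic cylinders $C_r(x_0,t)$ with $r<\min\{\frac{R}{2}-|x_0|,\sqrt{t}\}$ lie inside the smooth region $B_{R/2}(0)\times[0,1]$, which is exactly the range of scales White's theorem demands. Applying Corollary~3.4 of \cite{Whi1} to the restricted interior flow at every $(x_0,t)\in B_{R/4}(0)\times(0,1]$ then produces the curvature bound $|A_{\Sigma_t}(x)|\le C/\sqrt{t}$ on $\supp\mu_t\cap B_{R/4}(0)$ with constant $C=C(n,k,\varepsilon)$.

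The step that deserves the most care is the quantitative assertion $C(n,k,\varepsilon)\to 0$ as $\varepsilon\to 0$. This is built into the $\varepsilon$-regularity version of White's theorem, and can be extracted by a standard compactness/contradiction argument: a sequence of flows with density ratios $\le 1+\varepsilon_i\to 1$ violating $|A|\le C_i/\sqrt{t}$ with $C_i\to 0$ could, after parabolic rescaling and invoking Huisken monotonicity together with Brakke's compactness theorem, be made to converge to a limit Brakke flow with all density ratios equal to $1$. The equality case of Huisken's monotonicity forces the limit to be a multiplicity-one static plane, on which $A\equiv 0$; on the other hand the scale-invariant curvature at the basepoint of the rescaling is bounded below by a positive constant, producing a contradiction. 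Hence the constant in White's theorem tends to zero with $\varepsilon$, which is precisely the form of the statement needed in the applications of Section~3.
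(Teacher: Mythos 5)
Your proposal is correct and follows essentially the same route as the paper: restrict to the smooth interior flow in $B_{R/2}(0)$, observe that its classical Gaussian density ratios are dominated by the cutoff density ratios of the flow with boundary, and invoke White's local regularity theorem (the paper reduces to $R=1$ and rescales, while you treat general $R$ directly and sketch the compactness argument behind $C\to 0$, but these are immaterial differences). One small correction: the comparison $\Theta(\mathcal{M}',(x_0,t),r)\le\Theta(\mathcal{M},(x_0,t),r)$ rests on $\phi_R\equiv 1$ on $B_{R/2}(0)$ together with $\phi_R\ge 0$ (so that $\mathbf{1}_{B_{R/2}(0)}\le\phi_R$), not merely on $0\le\phi_R\le 1$, which by itself would give an inequality in the wrong direction.
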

\begin{proof}
Consider first $R = 1$. The claim follows from White's regularity theorem \cite{Whi1} once we prove the corresponding Gaussian density ratio for the restricted mean curvature flow $\mathcal{M}' = \{\mu_t\llcorner B_{1/2}(0)\}_{t \in [0,1]}$ is also bounded above by $1+ \varepsilon$, provided $\varepsilon$ is sufficiently small. Indeed, our cutoff function $\varphi$ is chosen so that it is identically 1 on $B_{1/2}(0)$, this gives that \begin{align*}
    \Theta(\mathcal{M}',(x_0,t),r) = \int \frac{\phi_1}{(4\pi r^2)^{n/2}} e^{-\frac{-\abs{x-x_0}^2}{4r^2}} d(\mu_{t} \llcorner B_{1/2}(0)) \le \Theta(\mathcal{M},(x_0,t),r)
\end{align*}
where $r$ is as in the statement of the theorem. Finally, if $R > 1$ the theorem follows by applying the $R = 1$ case to a suitable smaller ball of radius 1.
\end{proof}
We are now ready to prove \cref{t14}.
\begin{proof}[Proof of \cref{t14}]
Let $x \in \Sigma$. For sufficiently small $R > 0$ consider $\Sigma \cap \overbar{B_R(x)}$ and let $\mathcal{M} = \{\mu_t\}_{t \in [0,\infty)}$ be the standard integral Brakke flow starting from $\mu_0 = \mathcal{H}^n \llcorner (\Sigma \cap \overbar{B_R(x)})$. By entropy bound and \cref{t31} the restricted flow $\mathcal{M} \llcorner B_{R/2}(x)$ never develops a singularity. The second fundamental form bound immediately yields that the flow is flat in $B_{R/2}(x)$. Since $x$ is arbitrary we conclude $\Sigma$ is also flat.
\end{proof}
\section{Hausdorff Stability}
In this section we prove \cref{t13}. First we show the two-sided clearing out lemma analogous to Theorem 1.2 in \cite{Wang}. We will prove a higher codimension version under a much stronger condition $\lambda[\Sigma] \le 1 + \delta$, which, in view of \cref{t14}, forces the blow up limit to be a plane.\par 
For convenience in the section we assume $B_1(0) \subseteq U$. Note also we state our results in the context of smooth mean curvature flows instead of matching motions. 
\begin{prop}[Two-sided clearing out, cf. Theorem 1.2 in \cite{Wang}]
\label{p51}
There are constants $\delta = \delta(n,k)$ and $\eta = \eta(n,k)$ such that: Suppose $\mathcal{M} = \{\Sigma_t\}_{t \in I}$ is a smooth mean curvature flow in some open set $U \subseteq \mathbb{R}^{n+k}$ with entropy $\lambda[\mathcal{M}] \le 1 + \delta$. Suppose $(t_0 - R^2,t_0+R^2)\subseteq I$ and $x_0 \in \Sigma_{t_0} \cap B_{1/2}(0)$, then for all $0 < r < \min\{R,\frac{1}{2}\}$
\begin{align*}
    \mathcal{H}^n(B_r(x_0) \cap \Sigma_{t_0 - r^2}) \ge  \eta r^n
\end{align*}
and 
\begin{align*}
    \mathcal{H}^n(B_r(x_0) \cap \Sigma_{t_0 + r^2}) \ge  \eta r^n
\end{align*}
\end{prop}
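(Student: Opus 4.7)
The plan is to use the low entropy assumption to show, via \cref{t31}, that the flow is locally close to a stationary plane near $(x_0, t_0)$, so that the mass of $\Sigma_{t_0 \pm r^2}$ in $B_r(x_0)$ is essentially $\omega_n r^n$---far stronger than a mere clearing-out lower bound. To set this up, I would parabolically rescale about $(x_0, t_0)$, forming $\mathcal{M}' = \{\Sigma'_s\}$ with $\Sigma'_s = r^{-1}(\Sigma_{t_0 + r^2 s} - x_0)$. Since entropy is invariant under parabolic rescaling, the problem reduces to producing $\eta = \eta(n, k) > 0$ such that any smooth mean curvature flow $\{\Sigma'_s\}$ satisfying $0 \in \Sigma'_0$, $\lambda[\mathcal{M}'] \le 1 + \delta$, and smoothness on the parabolic cylinder $B_2(0) \times (-4, 4)$ obeys $\mathcal{H}^n(\Sigma'_{\pm 1} \cap B_1(0)) \ge \eta$; this containment follows from the hypothesis $r < \min\{R, \tfrac{1}{2}\}$ up to absorbing a fixed factor.

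Next, the entropy bound $\lambda[\mathcal{M}'] \le 1 + \delta$ together with Huisken's monotonicity ensures every Gaussian density ratio of $\mathcal{M}'$ is at most $1 + \delta$. I would then apply \cref{t31} (or its smooth-flow counterpart from \cite{Whi1}) twice, once to the time-shifted flow on $[-2, -1]$ and once to the flow on $[0, 1]$, each rescaled to the interval $[0,1]$ appearing in the statement of the theorem. For $\delta$ sufficiently small, this yields a uniform curvature bound
\[
|A_{\Sigma'_s}(x)| \le c(\delta) \quad \text{for all } s \in [-1, 1] \text{ and } x \in \Sigma'_s \cap \overline{B_1(0)},
\]
with $c(\delta) \to 0$ as $\delta \to 0$. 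Combined with $0 \in \Sigma'_0$, this forces $\Sigma'_s \cap B_1(0)$ to be a $C^2$-graph of arbitrarily small $C^1$-norm over the tangent plane $P = T_0 \Sigma'_0$ for every $s \in [-1, 1]$: the curvature bound controls the graph radius, while the pointwise estimate $|H| \le n c(\delta)$ confines the surface to a $Cc(\delta)$-tube around $\Sigma'_0$ throughout $|s| \le 1$. A direct area comparison with $P \cap B_1(0)$ then yields
\[
\mathcal{H}^n(\Sigma'_s \cap B_1(0)) \ge \omega_n(1 - c'(\delta))^n \ge \tfrac{\omega_n}{2} =: \eta
\]
for $\delta$ small, and undoing the rescaling produces the desired bound $\mathcal{H}^n(\Sigma_{t_0 \pm r^2} \cap B_r(x_0)) \ge \eta r^n$.

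The hard part is the careful application of \cref{t31}. Since its curvature estimate degenerates like $C/\sqrt{t}$ at the initial time, I must place the times $s = \pm 1$ strictly in the interior of the working parabolic cylinder, which effectively forces $r \le c_0 \min\{R, \tfrac{1}{2}\}$ for some absolute $c_0 \in (0, 1)$ rather than the full range $r < \min\{R, \tfrac{1}{2}\}$. Recovering the remaining sliver can be done either by recentering the application of White's theorem at $(x_0, t_0 \pm r^2)$ and exploiting the interior room $1 - |x_0| > \tfrac{1}{2}$, or more bluntly by accepting a slightly weakened form of the proposition which is still enough for the downstream applications. A secondary technical point is verifying that the graphical representation of $\Sigma'_s$ genuinely covers all of $B_1(0)$ in the tangent plane, but this is automatic once $\delta$ is chosen small enough that $1/c(\delta)$ exceeds a fixed constant depending only on $n$.
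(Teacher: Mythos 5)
Your proof is essentially correct but takes a genuinely different route from the paper. The paper argues by contradiction and compactness: it takes a sequence of flows with entropy at most $1+\tfrac{1}{i}$ violating the conclusion, parabolically rescales, uses Ilmanen's compactness theorem together with the curvature bound of \cref{t31} to extract a nontrivial smooth limit flow of entropy $1$, identifies that limit as a static plane through the origin via the rigidity result \cref{t14}, and then takes any $\eta<\omega_n$. You instead argue directly and quantitatively, using only \cref{t31} --- specifically the clause that $C(\varepsilon)\to 0$ as $\varepsilon\to 0$ --- to force small-gradient graphicality over the tangent plane and obtain the explicit constant $\eta=\omega_n/2$. This buys an effective $\eta$ and dispenses with both the compactness machinery and \cref{t14}, at the price of carrying out the graphical covering of $P\cap B_{1-O(c(\delta))}(0)$ and the displacement estimate by hand; note that \cref{t31} gives $|A|\le C/\sqrt{s}$ rather than a uniform bound, so the correct tube radius is $\int_0^1 \sqrt{n}\,C/\sqrt{s}\,ds=2\sqrt{n}\,C(\delta)$, which is harmless since it still tends to $0$. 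The one delicate point you flag --- the slice $t_0-r^2$ when $r$ is close to $\min\{R,\tfrac12\}$ --- is real, but the obstruction there is temporal, not spatial, so your ``recentering'' remedy does not resolve it: if $r$ is near $R$ there is simply not enough flow before time $t_0-r^2$ for the curvature estimate to be nondegenerate there. The clean repair (which the paper's own compactness argument also implicitly needs at the left endpoint of the rescaled interval) is to prove the bound at time $t_0-(1-\epsilon)r^2$ for a fixed dimensional $\epsilon=\epsilon(n,k)$ and push it back to $t_0-r^2$ using the localized almost-monotonicity $\frac{d}{dt}\int\phi\,d\mathcal{H}^n\llcorner\Sigma_t\le\int\frac{|\nabla\phi|^2}{4\phi}\,d\mathcal{H}^n\llcorner\Sigma_t$ for a cutoff $\phi$ adapted to $B_r(x_0)$, which costs only $O(\epsilon r^n)$ of mass by the entropy-induced area ratio bound; alternatively, your weakened range $r\le c_0\min\{R,\tfrac12\}$ is indeed sufficient for the applications in the remainder of the paper.
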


\begin{proof}
For simplicity we assume $(x_0,t_0) = (0,0)$. Suppose for a contradiction that the proposition is false. There is a sequence of smooth mean curvature flows $\mathcal{M}^i = \{\Sigma^i_t\}_{t \in (-(R^i)^2,(R^i)^2)}$ in $U^i \subseteq \mathbb{R}^{n+k}$ that satisfies $0 \in \Sigma_0$ with entropy $\lambda[\mathcal{M}] \le 1 + \frac{1}{i}$, and a sequence of radii $r^i < \min\{R^i,\frac{1}{2}\}$ such that 
\begin{align*}
    \mathcal{H}^n (B_{r^i}(0) \cap \Sigma^i_{-(r^i)^2}) < \eta (r^i)^n \text{ or } \mathcal{H}^n (B_{r^i}(0) \cap \Sigma^i_{(r^i)^2}) < \eta (r^i)^n
\end{align*}
Consider the parabolic rescaling of the flow $(r^i)^{-1}\mathcal{M}^i$, which exists in $B_2(0)$ for time $[-1,1]$. The rescaled flows satisfy
\begin{align*}
     \mathcal{H}^n (B_{1}(0) \cap \Sigma^i_{-1}) < \eta \text{ or } \mathcal{H}^n (B_{1}(0) \cap \Sigma^i_{1}) < \eta
\end{align*}
By compactness of mean curvature flows \cite{Ilm} there is a subsequence converging to a smooth mean curvature flow $\mathcal{M} = \{\Sigma_t\}_{t \in [-1,1]}$ in $B_2(0)$. Second fundamental form bound \cref{t31} gives that for sufficiently large $i$, $B_R(0) \cap \Sigma^i_t$ can be written as a graph over its tangent plane for some $R >0$. This implies that the limiting flow is nonempty. Evidently $\mathcal{M}$ has entropy $\lambda[\mathcal{M}] = 1$ and is therefore a flow of an $n$-dimensional plane $P$ passing through 0 in $\mathbb{R}^{n+k}$. But for an $n$-dimensional plane passing though 0, we clearly have 
\begin{align*}
     \mathcal{H}^n (B_{1}(0) \cap P) = \omega_n 
\end{align*}
where $\omega_n$ is the volume of the unit ball in $\mathbb{R}^n$, so if $\eta < \omega_n$ we get a contradiction.
\end{proof}

Next we prove \cref{t13}. We need to be a little bit more careful as the Hausdorff distance estimate used to prove this type of stability result does not necessarily hold for mean curvature flows in $B_1(0)$, since the flow might (eventually) go out of the ball. 
\begin{proof}[Proof of \cref{t13}]
We argue by contradiction. Suppose the theorem is false for some $\varepsilon > 0$, then we can find a sequence of $n$-dimensional smooth, properly embedded surfaces $\{\Sigma^i\}$ with $\lambda[\Sigma^i] < 1 + \frac{1}{i}$ but $\dist_P(\Sigma^i) \ge \varepsilon$. By definition this means that there are $R^i > 0$, $p^i \in \Sigma^i$ such that 
\begin{align}
    \dist_H(\Sigma^i \cap B_{R^i}(p^i), (P+p^i) \cap B_{R^i}(p^i)) > \varepsilon R^i
\end{align}
for every $n$-dimensional plane $P$. Since the Reifenberg distance is invariant under translation and dilation, we may assume $R^i = 1$ and $p^i = 0$. Let $\mathcal{M}^i = \{\mu_t^i\}_{t \in [0,\infty)}$ be the standard integral Brakke flow with boundary in $\overbar{B_4(0)}$ such that $\mu_0^i = \mathcal{H}^n \llcorner (\overbar{B_4(0)} \cap \Sigma^i)$. For sufficiently large $i$, White's local regularity theorem ensures that the restricted flow to $B_2(0)$ never develops a singularity. \par 
By compactness of mean curvature flow \cite{Ilm} there is a subsequence, still denoted by $\{\mathcal{M}^i\}$ such that $\mathcal{M}^i \llcorner B_2(0)$ converges to a smooth mean curvature flow $\mathcal{M}$ in $B_2(0)$. Again we have that the limiting flow is nonempty and that $\lambda[\mathcal{M}] = 1$ and is therefore a flow of $n$-dimensional planes by \cref{t13}. We claim that for sufficiently large $i$ we have 
\begin{align*}
    \dist_H(\Sigma^i \cap B_1(0), P_0 \cap B_1(0)) < \varepsilon
\end{align*}
which will give a contradiction to (4.1). \par 
To this end suppose first $x \in \Sigma^i \cap B_1(0)$. By \cref{p51}, the intersection $B_{\varepsilon/4}(x_0) \cap \supp \mu_{\varepsilon^2/16}^i$ is not empty for every $x_0 \in \Sigma^i$, so we can find $y \in \supp \mu_{\varepsilon^2/16}^i \cap B_{1+\varepsilon/4}(0)$ with $\abs{x-y} < \frac{\varepsilon}{4}$. Since $\mu_{\varepsilon^2/16}^i \to \mathcal{H}^n \llcorner P_0$ inside $B_2(0)$, for sufficiently large $i$ we can find $z' \in B_{1+\varepsilon/4}(0) \cap P_0$ such that $\abs{y - z} < \frac{\varepsilon}{4}$. If $z' \not \in B_1(0)$ let $z$ be the intersection of $\partial B_1(0)$ and the line connecting $0$ and $z'$, otherwise set $z = z'$. Then
\begin{align*}
    \abs{x - z} \le \abs{x-y} + \abs{y-z'} + \abs{z' - z} < \frac{3}{4}\varepsilon
\end{align*}
Similarly, given $z \in P_0 \cap B_1(0)$, if $z \not \in B_{1-\varepsilon/4}(0)$ we let $z'$ be the intersection of $\partial B_{1-\varepsilon/4}(0)$ and the line connecting 0 and $z$, otherwise we let $z' = z$. Again for sufficiently large $i$ we can find $y \in \mu_{\varepsilon^2/16}^i \cap B_{1 - \varepsilon/4}(0)$ such that $\abs{y - z} < \frac{\varepsilon}{4}$. Finally by \cref{p51} there is $x \in \supp \mu_0^i \cap B_1(0)$ such that $\abs{x-y} < \frac{\varepsilon}{4}$. Triangle inequality yields $\abs{x-z} < \frac{3}{4}\varepsilon$. This proves our claim and thus completes the proof. 
\end{proof}
\section{Asymptotic for Self-expanders} 
\label{exp}
In this section we prove \cref{t15}. Given an $n$-dimensional smooth, properly embedded self-expander $\Sigma$ in $\mathbb{R}^{n+k}$, let
\begin{align*}
    \mathcal{M}_\Sigma = \bigcup_{t > 0} \sqrt{t} \Sigma \subseteq \mathbb{R}^{n+k} \times \mathbb{R}
\end{align*}
be the spacetime track of $\Sigma$ under the mean curvature flow and let $\mathcal{C}_\Sigma = \bar{\mathcal{M}} \cap \{t = 0\}$ be the slice of the (closure of the) track at time $t = 0$. \par 
It is not hard to see that $\mathcal{C}_\Sigma$ is a (set-theoretic) cone; that is, $\mathcal{C}_\Sigma = \rho \mathcal{C}_\Sigma$ for any $\rho > 0$. Indeed, $\rho \mathcal{C}_\Sigma$ corresponds to the space-time track $\mathcal{M}_{\rho\Sigma} = \rho\mathcal{M}_\Sigma$ of $\rho \Sigma$, which is evidently the same as $\Sigma$. We observe by definition $x \in \mathcal{C}_\Sigma$ means that there is a sequence $t^i \to 0$ and $x^i \in \Sigma$ such that $\sqrt{t^i} x^i \to x$. In other words, $\sqrt{t}\Sigma$ converges pointwise to $\mathcal{C}_\Sigma$ as $t \to 0$. \par 
Since the mean curvature flow of a self-expander exists in all of $\mathbb{R}^{n+k}$, \cref{p51} immediately implies a Hausdorff distance estimate of the form 
\begin{align*}
    \dist_H(\sqrt{t_1}\Sigma,\sqrt{t_2}\Sigma) \le C\sqrt{t_1-t_2}
\end{align*}
for $t_1 > t_2 > 0$ and $C = C(n,k)$. This says that the surfaces of low entropy does not move too fast compared to the scaling, which is key to the proof of \cref{t15}. We remark that \cite{Wang} showed that the above Hausdorff distance estimate holds for hypersurfaces of entropy $\lambda[\Sigma] \le \lambda[\mathbb{S}^n]+ \delta$, and consequently all the results in this section continue to hold with the relaxed entropy bound in the case of hypersurfaces.
\begin{proof}[Proof of \cref{t15}]
Let $\delta$ be chosen as in \cref{p51}. We show the theorem with $\mathcal{C} = \mathcal{C}_\Sigma$. By compactness of the Hausdorff topology, the compact sets $\sqrt{t}\Sigma \cap \overbar{B_R(0)}$ converge uniquely as $t \to 0$ to a compact set $K \subseteq \overbar{B_R(0)}$. For simplicity, in this proof we will write 
\begin{align*}
    \dist^*_H(A,B) = \dist_H(A \cap \overbar{B_R(0)}, B \cap \overbar{B_R(0)})
\end{align*}
and similarly write $\dist^*$ for the usual distance in $\overbar{B_R(0)}$, i.e.
\begin{align*}
    \dist^*(A,B) = \dist(A \cap \overbar{B_R(0)}, B \cap \overbar{B_R(0)}) = \inf_{x \in A \cap \overbar{B_R(0)}} \dist(x,B\cap \overbar{B_R(0)})
\end{align*}
\par 
Let $\varepsilon > 0$ and take a $\varepsilon/4$-net of $\mathcal{C}_\Sigma \cap \overbar{B_R(0)}$, say 
\begin{align*}
    \mathcal{C}_\Sigma \cap \overbar{B_R(0)} \subseteq \bigcup_{j=1}^N B_{\varepsilon/4}(x^j)
\end{align*}
where $x^j \in \mathcal{C}_\Sigma \cap \overbar{B_R(0)}$. Let $x \in B_{\varepsilon/4}(x^j)$ for some $j$ and let $\{\sqrt{t^i}y^i\}$ be the corresponding sequence that converges to $x^j$. By Hausdorff distance estimate, for each $t \in (t^{i+1},t^i)$ there is $y_t \in \Sigma$ such that 
\begin{align*}
    \abs{\sqrt{t} y_t - \sqrt{t^i}y^i} \le C\sqrt{t^i - t} < \varepsilon
\end{align*}
if $t^i$ is sufficiently small. Hence by triangle inequality we have 
\begin{align*}
    \abs{\sqrt{t}y_t - x} \le \abs{x - x^j} + \abs{\sqrt{t} y_t - \sqrt{t^i}y^i} + \abs{\sqrt{t^i}y^i - x} < \varepsilon
\end{align*}
for $t^i$ sufficiently small. Take $t_0$ to be the minimum over all $j$ we see that $t < t_0$ implies $\dist^*(\mathcal{C}_\Sigma,\sqrt{t}\Sigma) < \varepsilon$. \par On the other hand given $\sqrt{t}x \in \sqrt{t}\Sigma$, we can use the Hausdorff distance estimate to define a Cauchy sequence in the following way: Set $x_1 = x$. Having defined $x_i$, let $x_{i+1} \in \Sigma$ be such that
\begin{align*}
    \abs{\frac{\sqrt{t}}{2^{i}}x_i - \frac{\sqrt{t}}{2^{i+1}}x_{i+1}} \le C\sqrt{3}\frac{\sqrt{t}}{2^{i+1}}
\end{align*}
Let $y = \lim_{i\to \infty} x_i$, then evidently $y \in \mathcal{C}_\Sigma$, and we have the distance estimate
\begin{align*}
    \abs{\sqrt{t} x - y} \le C\sqrt{3t}\sum_{i=1}^\infty \frac{1}{2^{i+1}} < \frac{\varepsilon}{2}
\end{align*}
provided $t$ is sufficiently small (but independent of $x$). This shows that $y \in B_{R+\varepsilon/2}(0)$. If $y \in B_R(0)$ then the above estimate yields immediately $\dist(\sqrt{t}x,y) < \varepsilon$. Otherwise let $\bar{y} \in \mathcal{C}_\Sigma$ be the intersection of the line connecting $0$ to $y$ and $\partial B_R(0)$ (since $\mathcal{C}_\Sigma$ is a cone), then
\begin{align*}
    \abs{\sqrt{t}x - \bar{y}} \le \abs{\sqrt{t}x - y} + \abs{y - \bar{y}} < \frac{\varepsilon}{2} + \frac{\varepsilon}{2} = \varepsilon
\end{align*}
This proves $\dist^*(\sqrt{t}\Sigma,\mathcal{C}_\Sigma) < \varepsilon$ for sufficiently small $t$. Combining the above two inequalities gives $\dist_H^*(\sqrt{t}\Sigma ,\mathcal{C}_\Sigma) < \varepsilon$. Finally triangle inequality gives 
\begin{align*}
    \dist_H^*(K,\mathcal{C}_\Sigma) \le \dist_H^*(\sqrt{t}\Sigma,\mathcal{C}_\Sigma) + \dist_H^*(\sqrt{t}\Sigma,K) < 2\varepsilon
\end{align*}
for sufficiently small $t$. This implies $K = \mathcal{C}_\Sigma$. \par 
To show the convergence rate it suffices to show $\dist^*(\mathcal{C}_\Sigma,\sqrt{t}\Sigma) \le C\sqrt{t}$ for $t$ sufficiently small. Let $y \in \mathcal{C}_\Sigma \cap \overbar{B_R(0)}$. By triangle inequality and Hausdorff distance estimates, for every integer $n$ we have 
\begin{align*}
    \dist^*(y,\sqrt{t}\Sigma) \le \dist^*\left(y,\frac{\sqrt{t}}{2}\Sigma\right) + \frac{C\sqrt{3}}{2} \sqrt{t} \le \dist^*\left(y,\frac{\sqrt{t}}{2^n}\Sigma\right) + C\sqrt{3t}\sum_{i=1}^n \frac{1}{2^i} 
\end{align*}
So for fixed $\varepsilon > 0$ there is some suffciently large $n$ such that 
\begin{align*}
    \dist^*(y,\sqrt{t}\Sigma) \le C\sqrt{3t} + \varepsilon
\end{align*}
holds. This completes the proof.
\end{proof}
On the other hand, instead of a set-theoretic limit, one could treat $\{\sqrt{t}\Sigma\}_{t > 0}$ as a sequence of  varifolds (that is, Radon measures on $\mathbb{R}^{n+k} \times G(n+k,n)$, where $G(n+k,n)$ is the space of $n$-dimensional planes in $\mathbb{R}^{n+k}$) and use compactness of Radon measures to deduce the existence of a limiting varifold $V$, which again is a Radon measure on the Grassman bundle $\mathbb{R}^{n+k} \times G(n+k,n)$. Of course, the limiting varifold depends on the subsequence one takes, so $V$ needs not to be unique. \par 
Let $\mathcal{LV}(\Sigma)$ denote the space of all possible limiting varifolds associated to the self-expander $\Sigma$, and given $V \in \mathcal{LV}(\Sigma)$ let $\mu_V$ be its associated mass measure, which is a Radon measure on $\mathbb{R}^{n+k}$ given by the explicit formula
\begin{align*}
\mu_V(U) = V(\pi^{-1}(U))
\end{align*}
for $U \in \mathbb{R}^{n+k}$, where $\pi$ is the projection of $U \times G(n+k,n)$ onto $U$. In the low entropy case, it is expected that $\mathcal{LV}(\Sigma)$ consists of a unique element which is countably $n$-rectifiable, has integral density, and is supported precisely on $\mathcal{C}_\Sigma$ such that the tangent planes agree. In other words, the set-theoretic limit should coincide with the limiting varifold. Assuming this is true, varifold convergence implies that we can interpret the cone $\mathcal{C}_\Sigma$ as the initial data of the integral Brakke flow of the self-expander $\Sigma$; that is, $\Sigma$ can be viewed as an self-expander coming out of the cone $\mathcal{C}_\Sigma$. It also implies that $\lambda[\Sigma] = \lambda[\mathcal{C}_\Sigma]$. So if $\lambda[\Sigma]$ is sufficiently close to 1, an easy generalization of \cref{t13} shows that the cone $\mathcal{C}_\Sigma$ is also Reifenberg flat in the sense of \cref{d13}.\par 
Unfortunately we cannot prove this, but we will show below that at least the mass measure associated to each $V \in \mathcal{LV}(\Sigma)$ is countably $n$-rectifiable and is supported on $\mathcal{C}_\Sigma$.
\begin{thm}
\label{t51}
Let $\delta$ be as in \cref{t15}. Suppose $\Sigma$ is a properly embedded $n$-dimensional self-expander in $\mathbb{R}^{n+k}$ with $\lambda[\Sigma] \le 1 + \delta$. Let $V \in \mathcal{LV}(\Sigma)$, then $\mu_V$ is countably $n$-rectifiable and $\supp \mu_V = \mathcal{C}_\Sigma$. 
\end{thm}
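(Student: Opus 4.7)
The plan splits the theorem into the support identity $\supp \mu_V = \mathcal{C}_\Sigma$ and the rectifiability of $\mu_V$.

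For the containment $\supp \mu_V \subseteq \mathcal{C}_\Sigma$, if $x \notin \mathcal{C}_\Sigma$ then some $B_{2\rho}(x)$ is disjoint from $\mathcal{C}_\Sigma$; \cref{t15} applied in a sufficiently large ball then forces $\sqrt{t}\Sigma \cap B_\rho(x) = \emptyset$ for all sufficiently small $t$, and lower semicontinuity of mass on open sets under weak varifold convergence yields $\mu_V(B_\rho(x)) = 0$. For the reverse $\mathcal{C}_\Sigma \subseteq \supp \mu_V$, I would fix $x \in \mathcal{C}_\Sigma$, $\tilde\rho > 0$, and a small $\rho$. By \cref{t15} applied at time $\rho^2$, one obtains $z_\rho \in \rho\Sigma$ with $\abs{z_\rho - x} \le C\rho$, and the smooth convergence $\sqrt{t+\rho^2}\Sigma \to \rho\Sigma$ as $t \to 0$ provides $y_t \in \sqrt{t+\rho^2}\Sigma$ with $y_t \to z_\rho$. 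Applying the past half of \cref{p51} (after a parabolic rescaling and translation to meet the normalization hypotheses, both of which preserve entropy and smoothness) at $(y_t, t+\rho^2)$ with radius $\rho$ yields $\mu_{V_t}(B_\rho(y_t)) \ge \eta\rho^n$. For $t$ small enough, $B_\rho(y_t) \subseteq B_{\tilde\rho}(x)$, so upper semicontinuity of mass on closed balls delivers $\mu_V(\overbar{B_{\tilde\rho}(x)}) \ge \eta\rho^n > 0$.

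For the rectifiability of $\mu_V$, the plan is to realize $V$ as the initial time slice of an integer Brakke flow on $[0,T]$ via Ilmanen's compactness theorem. Let $\tau_j \to 0$ be a subsequence along which $V_{\tau_j} \to V$, and define the time-shifted Brakke flows $\mathcal{M}^j = \{\mathcal{H}^n \llcorner \sqrt{s+\tau_j}\Sigma\}_{s \in [0,T]}$. Each $\mathcal{M}^j$ is smooth and hence an integer Brakke flow, and the entropy bound $\lambda[\Sigma] \le 1 + \delta$ furnishes a uniform local mass bound: for any $R>0$, using the Gaussian density with $t_0 = R^2$ gives $\mu^j_s(\overbar{B_R(0)}) \le (4\pi R^2)^{n/2} e^{1/4} (1+\delta)$ for all $j$ and all $s \in [0,T]$. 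Ilmanen's compactness then produces, along a further subsequence, an integer Brakke flow $\mathcal{M}^* = \{\mu^*_s\}_{s \in [0,T]}$. Uniqueness of smooth mean curvature flow forces $\mu^*_s = \mathcal{H}^n \llcorner \sqrt{s}\Sigma$ for $s > 0$, and the pre-selection of $\tau_j$ together with right-continuity of the Brakke flow in time identifies $\mu^*_0 = \mu_V$. Since each time slice of an integer Brakke flow is by definition an integral Radon measure (i.e., lies in $\mathcal{IM}_n$), $\mu_V$ is integer rectifiable, and in particular countably $n$-rectifiable.

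The main delicate point I anticipate is ensuring that the limit at the initial time $s = 0$ produced by Ilmanen's compactness is indeed $\mu_V$, rather than some other element of $\mathcal{LV}(\Sigma)$, since the compactness theorem typically only asserts varifold convergence at almost every time. This is handled by a diagonal argument on the subsequences, using upper semicontinuity of the Brakke-flow mass at the initial time together with the pre-chosen convergence $\mu^j_0 = V_{\tau_j} \to \mu_V$ to anchor the identification $\mu^*_0 = \mu_V$. A minor bookkeeping issue is that \cref{p51} is stated with the normalization $B_1(0) \subseteq U$ and $r < 1/2$; its application at arbitrary spacetime points and scales proceeds by a standard parabolic rescaling that preserves the entropy bound, the smoothness of the flow, and the structural content of the estimate.
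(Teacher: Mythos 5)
Your identification of the support is sound. The inclusion $\supp \mu_V \subseteq \mathcal{C}_\Sigma$ via the Hausdorff convergence of \cref{t15} plus lower semicontinuity of mass on open sets is essentially the paper's argument. For the reverse inclusion the paper simply invokes the Gaussian density lower bound of \cref{p53}, whereas you use the backward half of the clearing-out lemma \cref{p51} together with upper semicontinuity on compact sets; this is a valid alternative, and in fact it buys you more than you use: it gives $\mu_V(\overbar{B_{\tilde\rho}(x)}) \ge \eta (C+2)^{-n}\tilde\rho^n$ for every $x \in \mathcal{C}_\Sigma$ and every $\tilde\rho$, i.e.\ a uniform lower area-ratio bound for $\mu_V$ itself. (Also, your worry about whether $\mu^*_0 = \mu_V$ is not the real issue; Ilmanen's compactness does give Radon measure convergence at every fixed time, so anchoring the $t=0$ slice to the pre-chosen subsequence is fine.)

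The genuine gap is in the rectifiability step. An integral Brakke flow is only required to have integral (indeed, rectifiable) time slices for \emph{almost every} $t$, and Ilmanen's compactness theorem only produces integrality of the limit slices for a.e.\ $t$; the single time $t=0$ is exactly where this can fail, and it is precisely the initial slice you need. So ``each time slice of an integer Brakke flow lies in $\mathcal{IM}_n$'' is not available at $t=0$, and the conclusion that $\mu_V$ is integer rectifiable does not follow. This is not a technicality: if your argument worked it would prove integrality of the limit varifold, which is exactly the statement the paper flags as expected but unproven in the discussion preceding \cref{t51}. The paper instead deduces countable $n$-rectifiability from Preiss' theorem, using the entropy bound for the upper area-ratio bound and \cref{p53} for the lower one. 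The fix is short for you: your clearing-out computation already supplies the lower density bound for $\mu_V$ at every point of $\supp\mu_V = \mathcal{C}_\Sigma$, and the entropy bound supplies the upper one, so you can discard the Brakke-flow construction entirely and conclude by Preiss.
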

In view of Preiss' theorem \cite{Pre} (see also \cite{Del}), to deduce rectifiability it suffices to show that the area ratios of the varifold are bounded above and below. The upper bound is a direct consequence of the finite entropy assumption. To establish the lower bound, we first need a lemma that works in general for all surfaces of bounded entropy.
\begin{lem}
\label{l52}
Suppose $\Sigma$ is an $n$-dimensional smooth surface in $\mathbb{R}^{n+k}$ with bounded entropy. For every $\varepsilon > 0$ there is $R = R(n,k,\Sigma,\varepsilon)$ such that 
\begin{align*}
    \int_{\Sigma \cap (\mathbb{R}^{n+k} \setminus B_R(0))} \frac{1}{(4\pi)^{n/2}} e^{-\frac{\abs{x}^2}{4}}d\mathcal{H}^n \le \varepsilon
\end{align*}
\end{lem}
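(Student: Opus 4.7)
The plan is to exploit the fact that the entropy bound provides Gaussian area control at every scale and every center, not just at the distinguished scale appearing in the integral. The finiteness of $\lambda[\Sigma]$ already shows that $x \mapsto e^{-|x|^2/4}$ is $\mathcal{H}^n$-integrable on $\Sigma$, so some abstract absolute continuity argument would work, but a direct dyadic decomposition gives a concrete tail bound that only depends on $\lambda[\Sigma]$ (and $n,k$).

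Concretely, I would decompose
\begin{align*}
    \mathbb{R}^{n+k} \setminus B_R(0) = \bigcup_{j \ge 0} A_j, \qquad A_j = B_{2^{j+1}R}(0) \setminus B_{2^j R}(0),
\end{align*}
and estimate the integral over each annulus separately. On $A_j$ the pointwise bound $e^{-|x|^2/4} \le e^{-4^j R^2/4}$ holds. To control $\mathcal{H}^n(\Sigma \cap A_j)$ I would apply the entropy inequality to the shifted and rescaled Gaussian centered at $0$ with parameter $t_0 = 4^j R^2$; for $x \in A_j$ one has $|x|^2/(4t_0) \in [1/4,1]$, so the rescaled weight is uniformly bounded below by $e^{-1}/(4\pi)^{n/2}(2^j R)^n$. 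This yields
\begin{align*}
    \mathcal{H}^n(\Sigma \cap A_j) \le e \,(4\pi)^{n/2}\, \lambda[\Sigma]\,(2^j R)^n.
\end{align*}

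Combining the pointwise and mass estimates on $A_j$ and summing,
\begin{align*}
    \int_{\Sigma \setminus B_R(0)} \frac{1}{(4\pi)^{n/2}} e^{-|x|^2/4}\, d\mathcal{H}^n \le C(n,\lambda[\Sigma]) \sum_{j \ge 0} (2^j R)^n e^{-4^j R^2/4}.
\end{align*}
For $R$ large the ratio between consecutive terms is $2^n e^{-3 \cdot 4^j R^2/4} \ll 1$, so the series is dominated by its $j=0$ term and the total is bounded by $C(n,\lambda[\Sigma])\, R^n e^{-R^2/4}$. This goes to $0$ as $R\to\infty$, so for any $\varepsilon > 0$ one can choose $R = R(n,k,\lambda[\Sigma],\varepsilon)$ large enough for the tail to be $\le \varepsilon$.

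There is no real obstacle here: the computation is purely bookkeeping. The only point requiring a little care is matching the scale parameter $t_0$ to the dyadic shell so that the Gaussian weight in the entropy inequality stays uniformly bounded below on $A_j$; once that alignment is chosen the entropy hypothesis does all the work and the resulting geometric series is trivially summed. Notably this argument shows that the lemma actually holds with $R$ depending on $\Sigma$ only through $\lambda[\Sigma]$, which is what makes it usable in the proof of \cref{t51}.
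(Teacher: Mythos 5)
Your argument is correct and follows essentially the same route as the paper: decompose the complement of $B_R(0)$ into dyadic annuli, bound the Gaussian weight pointwise on each annulus, control $\mathcal{H}^n(\Sigma\cap A_j)$ polynomially using the entropy, and sum the resulting rapidly decaying series. The only (cosmetic) difference is that you obtain the annulus mass bound by testing the $F$-functional directly at the matched scale $t_0=4^jR^2$, whereas the paper first records polynomial volume growth and then covers each annulus by $N(n,k)$ balls; both yield the same $C(n,\lambda[\Sigma])(2^jR)^n$ estimate, and your closing remark that $R$ depends on $\Sigma$ only through $\lambda[\Sigma]$ matches the paper's parenthetical observation.
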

\begin{proof}
Note that bounded entropy implies (at most) polynomial volume growth, i.e.
\begin{align*}
    \mathcal{H}^n \llcorner \Sigma(B_R(x)) \le C\lambda[\Sigma]R^n
\end{align*}
for every $R > 0$ and $x \in \mathbb{R}^{n+k}$. Now since there is a dimensional constant $N = N(n,k)$ such that $B_{2R}(0) \setminus B_R(0)$ can be covered by $N$ balls of radius $R/2$, we have 
\begin{align*}
    \int_{\Sigma \cap (B_{2R}(0) \setminus B_R(0))} \frac{1}{(4\pi)^{n/2}} e^{-\frac{\abs{x}^2}{4}}d\mathcal{H}^n \le \frac{2^{-n}CN\lambda[\Sigma]}{(4\pi)^{n/2}} e^{-\frac{R^2}{16}} R^n
\end{align*}
More generally doing this in $B_{2^{i+1}R}(0) \setminus B_{2^i R}(0)$ we have 
\begin{align*}
    \int_{\Sigma \cap (B_{2^{i+1}R}(0) \setminus B_{2^iR}(0))} \frac{1}{(4\pi)^{n/2}} e^{-\frac{\abs{x}^2}{4}}d\mathcal{H}^n \le \frac{CN\lambda[\Sigma]}{(4\pi)^{n/2}} e^{-\frac{2^{2i-2} R^2}{4}} 2^{(i-1)n}R^n
\end{align*}
Summing over $i$ we arrive at
\begin{align*}
    \int_{\Sigma \cap (\mathbb{R}^{n+k} \setminus B_R(0))} \frac{1}{(4\pi)^{n/2}} e^{-\frac{\abs{x}^2}{4}}d\mathcal{H}^n \le \frac{C N \lambda[\Sigma]}{(4\pi)^{n/2}}\left(\sum_{i=0}^\infty 2^{(i-1)n} e^{-2^{2i-4}R^2} \right)R^n
\end{align*}
Let us agree that $R > 1$, and observe that $2^{2i-4} \ge i-1$ for all integers $i \ge 2$, then above bound then becomes 
\begin{align*}
    \int_{\Sigma \cap (\mathbb{R}^{n+k} \setminus B_R(0))} \frac{1}{(4\pi)^{n/2}} e^{-\frac{\abs{x}^2}{4}}d\mathcal{H}^n \le \frac{C N \lambda[\Sigma]}{(4\pi)^{n/2}}\left(2e^{-\frac{R^2}{4}} + \sum_{i=2}^\infty ((2R)^n e^{-R^2})^{i-1}\right)
\end{align*}
where we have isolated the first two terms. The quantity inside the bracket goes to 0 as $R \to \infty$, so by choosing $R$ large enough depending on the dimensions and $\Sigma$ (in fact only on the bound on $\lambda[\Sigma]$) we can guarantee 
\begin{align*}
    \int_{\Sigma \cap (\mathbb{R}^{n+k} \setminus B_R(0))} \frac{1}{(4\pi)^{n/2}} e^{-\frac{\abs{x}^2}{4}}d\mathcal{H}^n \le \varepsilon
\end{align*}
for any $\varepsilon > 0$.
\end{proof}
\begin{prop}
\label{p53}
Let $\delta$ be as in \cref{t15}. Suppose $\Sigma$ is an $n$-dimensional properly embedded smooth self-expander in $\mathbb{R}^{n+k}$ with $\lambda[\Sigma] \le 1 + \delta$. For any $y \in \mathcal{C}_\Sigma$, there is $\gamma = \gamma(n,k,\Sigma) > 0$ and $c > 0$ such that
\begin{align*}
    \frac{\mathcal{H}^n(B_{\gamma\sqrt{t}}(y) \cap \mathcal{C}_\Sigma)}{\omega_n (\gamma^2t)^{n/2}} > c > 0
\end{align*}
In particular, by taking $t \to 0$ we infer that $\Theta(\mathcal{H}^n \llcorner \mathcal{C}_\Sigma,y) > 0$ for all $y \in \mathcal{C}_\Sigma$.
\end{prop}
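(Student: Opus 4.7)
The plan is to show that the cone $\mathcal{C}_\Sigma$ is itself globally Reifenberg flat, and then invoke Reifenberg's topological disk theorem to conclude the Hausdorff density lower bound. Since the entropy is scale invariant, $\lambda[\sqrt{t}\Sigma] = \lambda[\Sigma] \le 1 + \delta$ for every $t > 0$. Shrinking $\delta = \delta(n,k)$ at the outset if necessary (which is permissible, as both \cref{t15} and \cref{t13} merely assert the existence of a suitable threshold), \cref{t13} applied to each $\sqrt{t}\Sigma$ gives $\dist_P(\sqrt{t}\Sigma) \le \varepsilon$ with $\varepsilon$ as small as we wish.

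To upgrade this to Reifenberg flatness of $\mathcal{C}_\Sigma$, fix $y \in \mathcal{C}_\Sigma$ and $R > 0$. By \cref{t15} we may pick $y_t \in \sqrt{t}\Sigma$ with $\abs{y - y_t} \le C_H \sqrt{t}$ for all sufficiently small $t > 0$, with $C_H = C_H(n,k)$. The planar distance bound at $y_t$ yields an $n$-plane $P_t$ through the origin satisfying
\begin{align*}
\dist_H\bigl((y_t + P_t) \cap B_R(y_t),\ \sqrt{t}\Sigma \cap B_R(y_t)\bigr) \le \varepsilon R.
\end{align*}
Passing to a sequence $t_i \to 0$ along which $P_{t_i} \to P$ in the compact Grassmannian of $n$-planes, and combining the above with the Hausdorff estimate of \cref{t15} on a slightly larger ball together with the vanishing translation $\abs{y - y_{t_i}}$, the triangle inequality in the limit gives
\begin{align*}
\dist_H\bigl((y + P) \cap B_R(y),\ \mathcal{C}_\Sigma \cap B_R(y)\bigr) \le \varepsilon R.
\end{align*}
Since $y$ and $R$ were arbitrary, $\mathcal{C}_\Sigma$ is $(\varepsilon, \infty)$-Reifenberg flat.

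Once $\varepsilon$ is smaller than the universal smallness constant in Reifenberg's topological disk theorem (see \cite{Rei}, or the sharper formulation with Ahlfors regularity bounds in \cite{Her}), $\mathcal{C}_\Sigma$ is locally bi-H\"older equivalent to an $n$-plane, and the standard consequence is the uniform lower density bound
\begin{align*}
\mathcal{H}^n(B_r(y) \cap \mathcal{C}_\Sigma) \ge (1 - c'\varepsilon)\, \omega_n r^n
\end{align*}
for every $y \in \mathcal{C}_\Sigma$ and every $r > 0$. Specializing to $r = \gamma\sqrt{t}$ for any $\gamma > 0$ gives the proposition (the dependence of the threshold on $\Sigma$ enters only through how small $t$ must be so that \cref{t15} produces the approximating points $y_t$), and letting $t \to 0$ yields the positive density at $y$. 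The main obstacle is the first step above: one has to combine three separate small errors---the planar distance $\varepsilon R$ on the smooth slice, the Hausdorff distance $C_H\sqrt{t}$ from $\sqrt{t}\Sigma$ to $\mathcal{C}_\Sigma$, and the translation $\abs{y - y_t} \le C_H\sqrt{t}$---and then extract a Grassmannian limit of the approximating planes $P_t$. Once Reifenberg flatness of $\mathcal{C}_\Sigma$ is established, the density bound is classical.
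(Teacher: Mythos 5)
The argument breaks down at the final, decisive step: the passage from $(\varepsilon,\infty)$-Reifenberg flatness of $\mathcal{C}_\Sigma$ to the lower bound $\mathcal{H}^n(B_r(y)\cap\mathcal{C}_\Sigma)\ge(1-c'\varepsilon)\omega_n r^n$ is not a ``standard consequence'' of Reifenberg's topological disk theorem, and it is false for general Reifenberg flat sets. The disk theorem produces only a bi-H\"older homeomorphism onto an $n$-disk, with H\"older exponents $1\pm O(\varepsilon)$; this pins the Hausdorff dimension of $\mathcal{C}_\Sigma$ into a window around $n$ but gives no control whatsoever on $\mathcal{H}^n$ from below (or above). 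There exist $\varepsilon$-Reifenberg flat sets, with $\varepsilon$ arbitrarily small, whose Hausdorff dimension is strictly less than $n$ and hence whose $\mathcal{H}^n$-measure vanishes, as well as ones with dimension exceeding $n$; extracting measure-theoretic information requires a genuinely stronger input, e.g.\ a Carleson condition on the Jones $\beta$-numbers (David--Toro) or an a priori Ahlfors regularity hypothesis --- which is precisely the conclusion you are after. (A further warning sign: your claimed bound would make $\gamma$ and $c$ universal constants $(1-c'\varepsilon)\omega_n$ independent of $\Sigma$, which is strictly stronger than what \cref{p53} asserts and essentially an almost-minimality statement.) The first half of your argument --- that $\mathcal{C}_\Sigma$ inherits Reifenberg flatness from \cref{t13} applied to the slices $\sqrt{t}\Sigma$ together with the Hausdorff convergence of \cref{t15} --- is reasonable and is in fact remarked on in the paper, but it cannot carry the weight you place on it.

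The paper takes an entirely different, measure-theoretic route that you would need to import anyway to repair this. For $y\in\mathcal{C}_\Sigma$ one considers the Gaussian density ratio of the expander flow at $(y,t)$ and lets the scale $r\uparrow\sqrt{t}$, so that the density ratio converges to
\begin{align*}
\int_{\mathcal{C}_\Sigma}\frac{1}{(4\pi t)^{n/2}}e^{-\frac{|x-y|^2}{4t}}\,d\mathcal{H}^n \ge 1,
\end{align*}
the lower bound coming from the monotonicity formula. The tail estimate \cref{l52} (which uses only the entropy upper bound, via polynomial volume growth) shows that all but a fixed fraction of this Gaussian integral is carried by $\mathcal{C}_\Sigma\cap B_{\gamma\sqrt{t}}(y)$ for a suitable $\gamma=\gamma(n,k,\Sigma)$, and bounding the Gaussian kernel from above on that ball converts this into the stated area-ratio lower bound. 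In short: the lower density bound comes from monotonicity, not from flatness, and no amount of Reifenberg technology substitutes for it.
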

\begin{proof}
Let $\mathcal{M}$ denote the mean curvature flow associated to $\Sigma$ and let $y \in \mathcal{C}_\Sigma$. Consider the Gaussian density ratio 
\begin{align*}
    \lim_{r \to \sqrt{t}}\Theta(\mathcal{M},(y,t),r) &= \lim_{r \to \sqrt{t}}\int_{\sqrt{t-r^2}\Sigma} \frac{1}{(4\pi r^2)^{n/2}} e^{-\frac{\abs{x - y}^2}{4r^2}}d\mathcal{H}^n \\
    &= \int_{\mathcal{C}_\Sigma} \frac{1}{(4\pi t)^{n/2}} e^{-\frac{\abs{x- y}^2}{4t}}d\mathcal{H}^n
\end{align*}
which is at least 1 by the entropy bound. Observe after a change of variables the integral is 
\begin{align*}
    \int_{(\mathcal{C}_\Sigma - \frac{y}{\sqrt{t}}) \cap B_{\gamma}(0)} \frac{1}{(4\pi)^{n/2}} e^{-\frac{\abs{x}^2}{4}}d\mathcal{H}^n
\end{align*} 
Since $\mathcal{C}_\Sigma$ has bounded entropy, by \cref{l52} there is some constant $\gamma$ depending on $\Sigma$ and the dimensions such that 
\begin{align*}
    \int_{\mathcal{C}_\Sigma \cap B_{\gamma\sqrt{t}}(y)} \frac{1}{(4\pi t)^{n/2}} e^{-\frac{\abs{x- y}^2}{4t}}d\mathcal{H}^n \ge c
\end{align*}
The proposition follows since 
\begin{align*}
    \int_{\mathcal{C}_\Sigma \cap B_{\gamma\sqrt{t}}(y)} \frac{1}{(4\pi t)^{n/2}} e^{-\frac{\abs{x- y}^2}{4t}}d\mathcal{H}^n \le C\frac{\mathcal{H}^n(B_{\gamma\sqrt{t}}(y) \cap \mathcal{C}_\Sigma)}{\omega_n (\gamma^2 t)^{n/2}}  &\qedhere
\end{align*}
\end{proof}
\cref{p53} also guarantees that the dimension of the cone does not drop. This is not the case in general: if we form a similar cone for a self-shrinker, it is possible that the cone does not have the same dimension as the original surface. For example the cone formed by the cylinder $\mathbb{S}^{n-1} \times \mathbb{R}$ in $\mathbb{R}^n$ is a line (consequently there is no area ratio lower bound). \par 
\begin{proof}[Proof of \cref{t51}]
By \cref{p53}, $\Theta(\mathcal{H}^n \llcorner \mathcal{C}_\Sigma,y) > 0$ for all $y \in \mathcal{C}_\Sigma$. By Priess' Theorem \cite{Pre} it remains to show that $\supp \mu_\Sigma = \mathcal{C}_\Sigma$. \par 
It is clear from \cref{p53} that $\mathcal{C}_\Sigma \subseteq \supp \mu_\Sigma$. So suppose $x \in \mathbb{R}^{n+k} \setminus \mathcal{C}_\Sigma$. Clearly if the flow does not reach the point $x$ at any time $t > 0$ it cannot be in the support. Hence we may assume $x \in \sqrt{t_0}\Sigma$ for some $t_0 > 0$. Let $d = \dist(x,\mathcal{C}_\Sigma) > 0$ (note that $d < \abs{x}$ since $0 \in \mathcal{C}_\Sigma$). By Hausdorff distance estimate, for sufficiently small $t$ we have 
\begin{align*}
    \dist_H(\sqrt{t}\Sigma \cap \overbar{B_{2\abs{x}}(0)},\mathcal{C}_\Sigma \cap \overbar{B_{2\abs{x}}(0)}) < \frac{d}{2}
\end{align*}
This shows that $x \not \in \supp (\mathcal{H}^n \llcorner \sqrt{t}\Sigma)$ for $t$ sufficiently small, so $x$ cannot be in the support of $\mu_\Sigma$. Hence $\supp \mu_\Sigma = \mathcal{C}_\Sigma$ and we are done.
\end{proof}
\begin{rem}
Preiss' theorem seems like an overkill for the proof of rectifiability of the (limiting) cone in the low entropy case. We expect that there is a simpler proof using directly the low entropy condition of the expander and involving less geometric measure theory tools.
\end{rem}


\begin{thebibliography}{{De }08}
	
	\bibitem[BW16]{BW1}
	J.~Bernstein and L.~Wang.
	\newblock A sharp lower bound for the entropy of closed hypersurfaces up to
	dimension six.
	\newblock {\em Invent. Math.}, 206(3):601--627, 2016.
	
	\bibitem[BW17]{BW2}
	J.~Bernstein and L.~Wang.
	\newblock A topological property of asymptotically conical self-shrinkers of
	small entropy.
	\newblock {\em Duke Math. J.}, 166(3):403--435, 2017.
	
	\bibitem[BW18]{BW3}
	J.~Bernstein and L.~Wang.
	\newblock Hausdorff stability of the round two-sphere under small perturbations
	of the entropy.
	\newblock {\em Math. Res. Let.}, 25(2):347--365, 2018.
	
	\bibitem[BW19]{BW4}
	J.~Bernstein and L.~Wang.
	\newblock Topological uniqueness for self-expanders of small entropy.
	\newblock \url{https://arxiv.org/abs/1902.02642}, 2019.
	\newblock Preprint.
	
	\bibitem[CM12]{CM1}
	T.~H. Colding and W.~P. {Minicozzi II}.
	\newblock Generic mean curvature flow i; generic singularities.
	\newblock {\em Ann. of Math.}, 175(2):755--833, 2012.
	
	\bibitem[{De }08]{Del}
	C.~{De Lellis}.
	\newblock {\em Rectifiable Sets, Densities and Tangent Measures}, volume~7 of
	{\em EMS Zurich Lectures in Advanced Mathematics}.
	\newblock American Mathematics Society, 2008.
	
	\bibitem[Din17]{Ding}
	Q.~Ding.
	\newblock Minimal cones and self-expanding solutions for mean curvature flows.
	\newblock \url{https://arxiv.org/abs/1503.02612}, 2017.
	\newblock Preprint.
	
	\bibitem[Eck04]{Eck}
	K.~Ecker.
	\newblock {\em Regularity Theory for Mean Curvature Flow}, volume~57 of {\em
		Progress in Nonlinear Differential Equations and Their Applications}.
	\newblock Birkhäuser Basel, 2004.
	
	\bibitem[EH91]{EH}
	K.~Ecker and G.~Huisken.
	\newblock Interior estimates for hypersurfaces moving by mean curvature.
	\newblock {\em Invent. Math.}, 105(1):547--569, 1991.
	
	\bibitem[Her17]{Her}
	O.~Hershkovits.
	\newblock Mean curvature flow of reifenberg sets.
	\newblock {\em Geom. Topol.}, 21(1):441--484, 2017.
	
	\bibitem[Hui90]{Hui}
	G.~Huisken.
	\newblock Asymptotic behavior for singularities of the mean curvature flow.
	\newblock {\em J. Differential Geom.}, 31(1):285--299, 1990.
	
	\bibitem[Ilm94]{Ilm}
	T.~Ilmanen.
	\newblock Elliptic regularization and partial regularity for motion by mean
	curvature.
	\newblock {\em Mem. Amer. Math. Soc}, 108(520), 1994.
	
	\bibitem[INS19]{INS}
	T.~Ilmanen, A.~Neves, and F.~Schulze.
	\newblock On short time existence for the planar network flow.
	\newblock {\em J. Differential Geom.}, 111(1):39--89, 2019.
	
	\bibitem[Pre87]{Pre}
	D.~Preiss.
	\newblock Geometry of measures in {Rn}: Distribution, rectifiability, and
	densities.
	\newblock {\em Ann. of Math.}, 125(3):537--643, 1987.
	
	\bibitem[Rei60]{Rei}
	E.~R. Reifenberg.
	\newblock Solution of the plateau problem for m -dimensional surfaces of
	varying topological type.
	\newblock {\em Acta Math.}, 104(1-2):1--92, 1960.
	
	\bibitem[Sim83]{Sim}
	L.~Simon.
	\newblock {\em Lectures on Geometric Measure Theory}.
	\newblock Proceedings of the Centre for Mathematical Analysis. Australian
	National University, 1983.
	
	\bibitem[Wan19]{Wang}
	S.~Wang.
	\newblock Round spheres are hausdorff stable under small perturbation of
	entropy.
	\newblock {\em J. Reine Agnew. Math.}, 2019.
	\newblock To appear.
	
	\bibitem[Whi05]{Whi1}
	B.~White.
	\newblock A local regularity theorem for mean curvature flow.
	\newblock {\em Ann. of Math.}, 161(3):1487--1519, 2005.
	
	\bibitem[Whi19]{Whi2}
	B.~White.
	\newblock Mean curvature flow with boundary.
	\newblock \url{https://arxiv.org/abs/1901.03008}, 2019.
	\newblock Preprint.
	
	\bibitem[Zhu19]{Zhu}
	J.~Zhu.
	\newblock On the entropy of closed hypersurfaces and singular self-shrinkers.
	\newblock {\em J. Differential Geom.}, 2019.
	\newblock To appear.
	
\end{thebibliography}

\end{document}